\newcommand{\R}{\mathds{R}}
\newtheorem {theorem} {Theorem}
\newtheorem {prop} [theorem] {Proposition}
\newtheorem {lemma} [theorem] {Lemma}
\newtheorem {definition} {Definition}
\newtheorem {remark} {Remark}
\begin{document}

\title[hip-hop  solutions]{Periodic oscillations in a $2N$-body problem}

\author[Oscar Perdomo, Andr\'es Rivera, John A. Arredondo, Nelson Castañeda, ]
{Oscar Perdomo $^1$, Andr\'es Rivera $^2$, John A. Arredondo $^3$, Nelson Castañeda $^4$, }

\address{$^1$ Departament of Mathematics. Central Connecticut State University. New Britain. CT 06050. Connecticut, USA.}
\address{$^2$  Departamento de Ciencias Naturales y Matem\'aticas.
	Pontificia Universidad Javeriana Cali, Facultad de Ingenier\'ia y Ciencias,
	Calle 18 No. 118--250 Cali, Colombia.}
\address{$^3$ Departamento de  Matem\'aticas
	Fundación Universitaria Konrad Lorenz, Facultad de Ciencias e Ingenier\'ia,
	Cra 9 bis 62-43, Bogotá, Colombia.}
\address{$^4$ Departament of Mathematics. Central Connecticut State University. New Britain. CT 06050. Connecticut, USA.}
\email{perdomoosm@cssu.edu, amrivera@javerianacali.edu.co,
	alexander.arredondo@konradlorenz.edu.co, castanedaN@ccsu.edu
}\emph{}

\subjclass[2010]{70F10, 37C27, 34A12.}

\keywords{N-body problem, periodic
orbits, hip-hop solutions, choreographies, bifurcations.}

\date{}
\dedicatory{}
\maketitle

\begin{center}\rule{0.9\textwidth}{0.1mm}
\end{center}
\begin{abstract}

\noindent
Hip-Hop solutions of the $2N$-body problem are solutions that satisfy at every instance of time, that the $2N$ bodies with the same mass $m$, are at the vertices of two regular $N$-gons, each one of these $N$-gons are at planes that are equidistant from a fixed plane $\Pi_0$ forming an antiprism. In this paper, we first prove that for every $N$ and every $m$ there exists a family of periodic hip-hop solutions. For every solution in these families the oriented distance to the plane $\Pi_0$, which we call $d(t)$, is an odd function that is also even with respect to $t=T$ for some $T>0.$ For this reason we call solutions in these families, double symmetric solutions. By exploring more carefully our initial set of periodic solutions, we numerically show that some of the branches stablished in our existence theorem have bifurcations that produce branches of solutions with the property that the oriented distance function $d(t)$ is not even with respect to any $T>0$, we call these solutions single symmetry solutions.  We prove that no single symmetry solution is a choreography. We also display explicit double symmetric solutions that are choreographies.

%The first family emerge from the trivial solution where all the bodies move on a circle. We prove the existence of this family for every  The second family emerge as a bifurcation of one of first families

\end{abstract}

\begin{center}\rule{0.9\textwidth}{0.1mm}
\end{center}

%Hip-hop solutions of the $2N$-body problem are solutions that satisfy that at every instance of time, the $2N$ bodies are equally distributed at the verteces of two regular $N$-gons, where each one of these $N$-gons are at planes that are equidistance from a fixed plane $\Pi_{0}$ and are rotated $pi/N$ one respect to the other, forming at any time a regular antiprism. When this distance $d(t)$ is zero, these two $N$ -gons form a regular $2 N$-gon. 
%We call the solutions in the first family solutions with double symmetry because the distance function $d(t)$ is odd with respect to a value of $t$ and even with respect to another value of $t$. 

%We call the solutions in the second family solutions with single symmetry because the distance function is odd with respect to a value of $t$ and it is not even with respect to any value of $t$.

%%%%%%%%%%%%%%%%%%%%%%%%%%%%%%%%%%%%%%%%%%%%%%%%%%%%%%%%%%%%%%%%%%%%%%%%%%%%%%%%%%%%%%%%%%%%%%%%%%%%%%%%%%%%%%%%%%%%%%%%%%%%%%

\begin{center}
	\includegraphics[width=10cm]{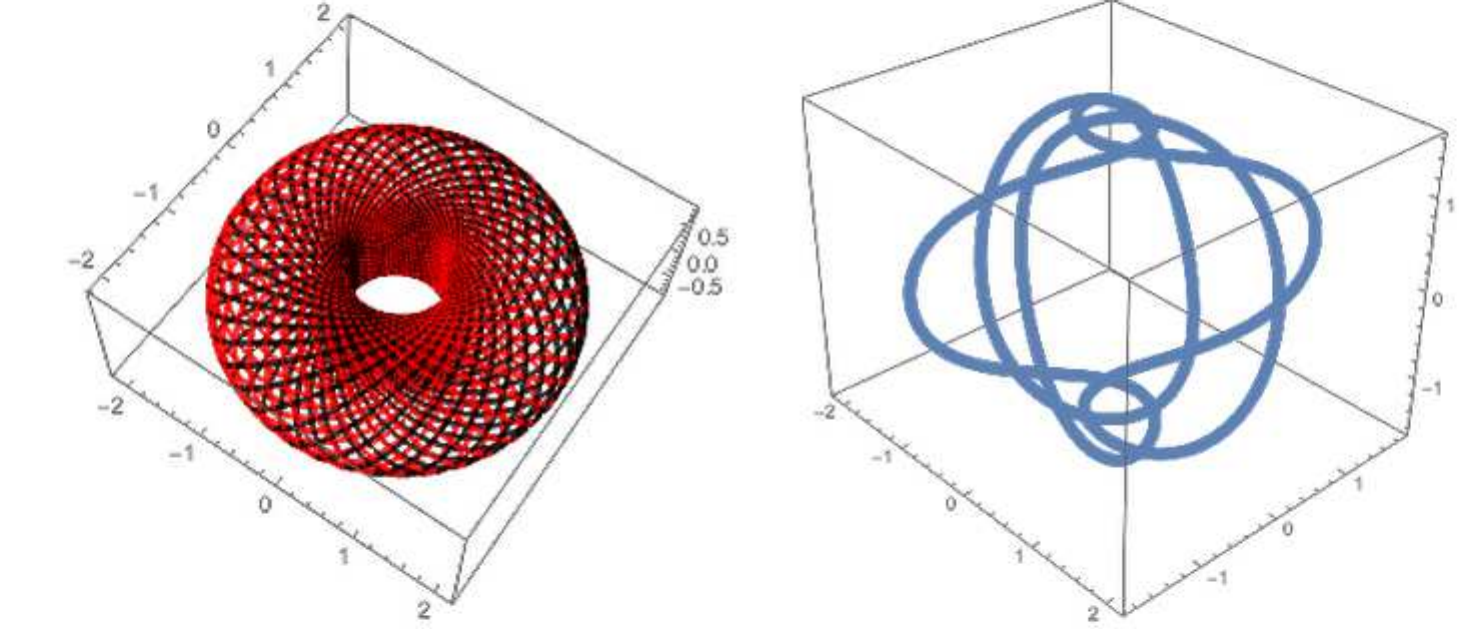}
\end{center}

\section{Introduction}\label{intro}

Hip-hop solutions of the $2N$-body problem are solutions satisfying that: (i) The masses of all the bodies are the same. (ii)  At every instance of time $t$, $N$ of the bodies are at the vertices of a regular $N$-gon contained in a  plane $\Pi_1(t)$ and the other $N$ bodies are at the vertices of a second regular  $N$-gon that differ from the first $N$-gone  by a translation and a rotation of $\frac{2\pi}{N}$ radians and it is contained in a plane $\Pi_2(t)$. (iii) For every $t$, the planes   $\Pi_1(t)$ and  $\Pi_2(t)$ are parallel and they are equidistant from a fixed plane  $\Pi_0$. (iv) The center of the $N$-gones are always in a fixed line $l_0$ perpendicular to $\Pi_0$. In particular 
when  $\Pi_1(t)\ne\Pi_2(t)$ the $2 N$ bodies form an antiprism, and when $\Pi_1(t)=\Pi_2(t)$ the $2 N$ bodies form a regular $2N$-gon. For any hip-hop solution we can define the function $d(t)$ that gives the oriented distance from the plane $\Pi_0$ to the plane $\Pi_1$ and the function $r(t)$ that gives the  distance from the line $l_0$ to any of the bodies. We will assume that the line $l_0$ is the $z$-axis and the plane $\Pi_0$ is the $x$-$y$ plane.  %See Figure \ref{fig:Fig1}. 

\begin{figure}[h]
\centerline
{\includegraphics[scale=0.19]{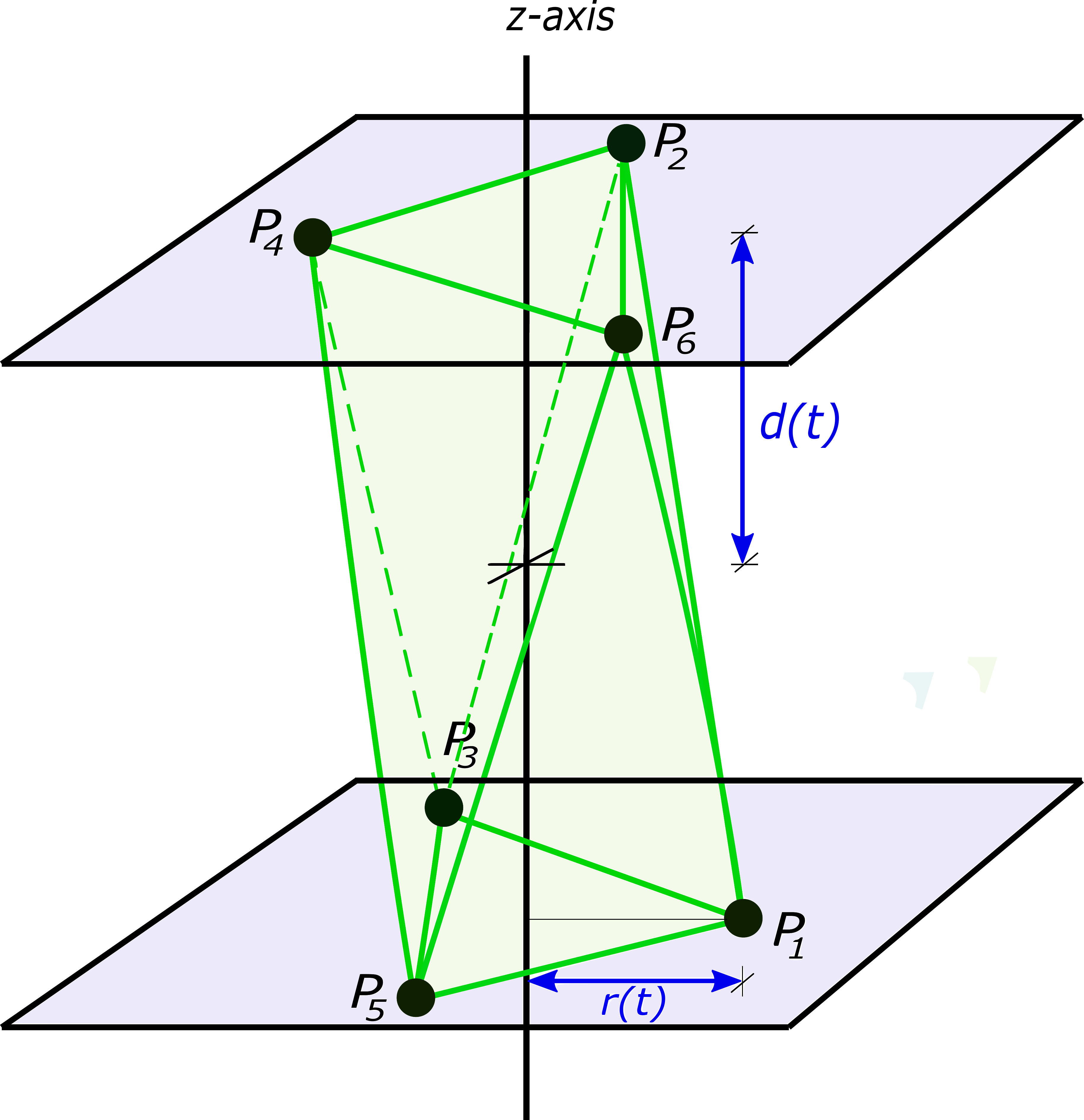}}
\caption{Six bodies moving at the vertices of a regular antiprism all time. The function $r(t)$ is the distance of each body respect to the $z$-axis passing through the center of the two parallel planes where each group of three bodies moves. The function $d(t)$ is the distance of the plane $\Pi_1$ to the plane $\Pi_0=\left\{(x,y,z):z=0\right\}$.}
\label{fig:Fig1}
\end{figure}

We will call a hip-hop solution, \textit{double symmetric}, if the function $d(t)$ satisfies that for all $t$, $d(-t)=-d(t)$ and $d(T-t)=d(T+t)$ for some nonzero $T$. We will call a hip-hop solution, \textit{single symmetric}, if the function $d(t)$ satisfies that $d(-t)=-d(t)$ for all $t$  and there is not a $T$ such that,  $d(T-t)=d(T+t)$  for all  $t$.

The differential equations for hip-hop solutions were provided by Meyer and Schmidt in 1993 \cite{MR1205666} as a model for braided rings of a planet based on a previous model for the rings of Saturn. They called these solutions alternating solutions and the only difference with the hip-hop solutons is that the alternating solutions consider an additional motionless body in the center of mass of the $2N$ bodies. For the alternating solutions, the body in the center makes the role of the planet while the $2N$ bodies are the small bodies conforming the ring. One of the pioneer works that showed the periodicity of some hip-hop solutions is the work of Chenciner and Venturelli \cite{MR1820355}, where the authors used variational methods to prove existence of periodic solutions. Some other papers that show the periodicity of hip-hop solutions by variational methods are \cite{MR1820355, MR2401905, MR2031430}. It is well known that the periodic solutions of the two body problem can be extended to periodic planar solutions of the $n$ body problem, with the trajectory of each body being a rotation of a single ellipse. The link \url{https://youtu.be/RjlZpqDFsDM} leads to a video showing some of these periodic solutions. Using a Bolzano argument, in \cite{MR2570295} the authors show the periodicity of hip-hop solution with motion close to ellipses with high eccentricity. In \cite{MR2267950} the authors use a Poincar\'e analytical continuation method to show the existence of double symmetric hip-hop solutions with trajectories close to circles.

Similar to the work in \cite{MR2267950} our solutions emerge from planar periodic solutions where all the bodies move in the same circle. We study both, doubly and single symmetric solutions. The technique that we use is slightly different from the regular Poincar\'e analytical method. We can say that it is more direct. Let us give the idea behind the method. We consider the functions $r(t)$ and $d(t)$ defined above as functions of one of the initial conditions and the angular momentum, more precisely we consider $R(a,b,t)=r(t)$ and $D(a,b,t)=d(t)$ where $b=\dot{d}(0)$ and $a$ is the angular momentum. All our solutions satisfy $d(0)=0$. For the circular solutions $b=0$ because the solution are planar, $d(t)\equiv 0$, and  the value of $a=a_0$ can be easily found  from the  condition that $r(t)$ is constant. In the Euclidean space $(a,b,T)$ with $T$ a fixed value of time, we have that due to the symmetries of the ordinary differential equations that govern the hip-hop solutions, we have that points in the space that satisfy the system of equations $D_t(a,b,T)=R_t(a,b,T)=0$ produce double symmetric hip-hop solutions with angular momentum $a$ and period $4T$, while points $(a,b,T)$ that satisfy $D(a,b,T)=R_t(a,b,T)=0$ produce hip-hop solutions with angular momentum $a$ and period $2T$ that are potentially single symmetric. With this idea in mind we have that double and single symmetric solutions are obtained from studying three surfaces: The surface in the space $(a,b,T)$ that satisfies $R_t(a,b,T)=0$, the  surface in the space $(a,b,T)$ that satisfies $D_t(a,b,T)=0$ and the surface  in the space $(a,b,T)$ that satisfies $D(a,b,T)=0$. Recall that the intersection of two surfaces is in general a curve and that if the gradient of the functions that define two surfaces are linearly dependent at a particular point in the intersection, then the solution of the system  may be the union of two curves. As an example, Figure \ref{SSP} show points $(a,b,T)$ that solve the system  $D(a,b,T)=R_t(a,b,T)=0$. In this paper we notice that the circular solutions of the $2N$ body problem produces a line $\Gamma=\{(a_0,0,T)\in \mathbb{R}^3\}$ in the space $(a,b,T)$ that solve the systems $D(a,b,T)=R_t(a,b,T)=0$ and $D_t(a,b,T)=R_t(a,b,T)=0$. By computing the gradient of the functions $D$, $D_t$ and $R_t$ we find points in $\Gamma$ that potentially allows nearby points that solve both systems. Half of these (bifurcation) points were found in the paper  \cite{MR2267950} using the Poincare analytical method. The other half that we found were those  bifurcation points that potentially may produce single symmetric solutions. It turned out that points in the smooth curves $\Omega_i$ emanating from the new bifurcation points, did not produced single symmetric hip-hop solutions near the circular solutions but, after doing analytic continuation, we found out that for some values of $N$,  $\Omega_i$ has a bifurcation point that generates a curve of points $\Phi$ in the space $(a,b,T)$ with the property that each of its point represents a single symmetric solution. We proved that an interesting difference between the single and double symmetric families of hip-hop solutions of the $2N$ body problem is the fact that no single symmetric solution can produce a choreographic while there are infinitely many choreographies in the family of double symmetric hip-hop solutions.

%These solutions for the complete problem are  \textit{hip-hop solutions} and could be periodic or quasi-periodic. Moreover, these families of three-dimensional orbits, periodic in the rotating frame, provides beautiful choreographies of the $2N$-bodies. 

%Moreover, the corresponding equations for $r$ and $z$ are
%
%\begin{equation}\label{cilindricas 3}
%\begin{cases}
%\begin{split}
%\ddot{r} &= \frac{\vartheta^2}{r^3} - \frac{\tilde{\gamma}_{N}}{r^2}, \\
%\ddot{z}&= - \frac{2mN z }{(r^2+z^2 )^{3/2}} ,
%\end{split}
%\end{cases}
%\end{equation}
%
%with $\displaystyle{\tilde{\gamma}_{N}=\frac{m}{4}\sum_{k=1}^{2N-1}\csc(\pi k/2N)}$. The system of differential equations $(\ref{cilindricas 3})$ corresponds to the equations of motions of the  \textit{generalized Sitnikov problem} in \cite{Rivera}, were the author A. Rivera proves the existence of  families of symmetric periodic solutions $z(t,e)$ that bifurcates from the trivial solution $z=0$. These bifurcating families emerge in special values of the eccentricity $e\in [0,1[$ of the elliptic orbits of the $Q_{1},Q_{2},\dots, Q_{2N}$ bodies. 

%%%%%%%%%%%%%%%%%%%%%%%%%%%%%%%%%%%%%%%%%%%%%%%%%%%%%%%%%%%%%%%%%%%%%%%%%%%%%%%%%%%%%%%%%%%%%%%%%%%%%%%%%%%%%%%%%%%%%%%%%%%%%%%%%%%%%%

\section{Preliminary results}

The anti-prism $2N$-body problem will be characterized in the following concrete framework:
%%%%%%%%%%%%%%%%%%%%%%%%%%%%%%%%%%%%%%%%%%%%%%%%%%%%%%%%%%%%%%%%%%%%%%%%%%%%%%%%%%%%%%%%%%%%%%%%%%%%%%%%%%%%%%%%%%%%%%%%%%%%%%\subsection{Equations of motion.}
\noindent
Consider $ Q_ {1}, Q_ {2}, \ldots, Q_ {2 N} $ bodies of equal mass $ m> 0 $, located on the vertices of a regular anti-prism. If $\mathbf{r}_{j}(t)$ is the position of the body $Q_{j}, j=1,\ldots, 2N$ at each instant $t$ and satisfy
\[
\mathbf{r}_{j}(t)=\mathcal{R}^{j-1}\mathbf{r}_{1}(t), \quad j=1,\dots, 2N,
\]
where $\mathcal{R}$ is a rotation/reflection matrix given by
\begin{equation*}
\label{matriz de A-}
\mathcal{R}=
\begin{pmatrix}
\cos (\frac{\pi}{N})& -\sin(\frac{\pi}{N}) & 0 \\
\sin(\frac{\pi}{N}) & \cos (\frac{\pi}{N}) & 0 \\
0& 0& -1
\end{pmatrix}.
\end{equation*}

Introducing cylindrical coordinates $(r,\theta,d)$ %and $\displaystyle{\mathbf{r}_{1}(t)=\left(r(t)\cos(\beta(t))\, ,\, r(t)\sin(\beta(t)),\,d(t)\right),}$
%\[
%\mathbf{r}_{1}(t)=\left(r(t)\cos(\beta(t))\, ,\, r(t)\sin(\beta(t)),\,d(t)\right),
%\]
%
it is shown that the equations of motions of the bodies are given by
\begin{equation}\label{cilindricas}
\begin{cases}
\begin{split}
\ddot{r} &= \frac{a^2}{r^3} - 2rm f(r, d),\\
\ddot{d} &=-\frac{m\,d}{2} g(r, d), \\
\dot{\theta} &=a/r^2,
\end{split}
\end{cases}
\end{equation}

where $\displaystyle{a}$ is the angular momentum of the system and
\begin{equation*}
\begin{split}
f(r, d) &= \sum_{k=1}^{2N-1} \frac{\sin ^2(k \pi/ 2N)}{\left[4r^2 \sin^2(k \pi / 2N) + ((-1)^k - 1)^2 d^2\right]^{3/2}}, \nonumber \\
g(r, d) &= \sum_{k=1}^{2N-1} \frac{((-1)^k - 1)^2}{\left[4r^2 \sin^2(k \pi / 2N) +((-1)^k - 1)^2d^2\right]^{3/2}}.
\end{split}
\end{equation*}

From now on, for $r_0, m$ and $N$ fixed, we denote by 
\[
R(a,b,t)=r(t), \quad D(a,b,t)=d(t), \quad \text{and} \quad \Theta(a,b,t)=\theta(t),
\] 

the solutions of the system (\ref{cilindricas}) with initial conditions
\begin{equation}\label{initial conditions}
	r(0)=r_0,\quad \dot{r}(0)=0,\quad d(0)=0,\quad \dot{d}(0)=b,\quad \theta(0)=0.
\end{equation}

It is clear that $r(t)$, $d(t)$ solves the \textit{reduced problem}
\begin{equation}
	\begin{cases}
		\begin{split}\label{cilindricas 2}
			\ddot{r} &= \frac{a^2}{r^3} - 2rm f(r, d),\\
			\ddot{d} &=-\frac{m\,d}{2} g(r, d),
		\end{split}
	\end{cases}
\end{equation}

then $r(t)$, $d(t)$ and $\theta(t)$ solves (\ref{cilindricas}) if
\[
\theta(t)=\int_{0}^{t}\frac{a}{r^{2}(s)}ds.
\]
Notice that if for some $a,b \in \R$ the functions $R(a,b,t)$ and $D(a,b,t)$ have the same period $T>0$ then, they provide a periodic hip-hop solution of (\ref{cilindricas}) if and only if $\theta(a,b,T)$ is equal to $p\,\pi/q$ with $p$ and $q$ whole numbers. In general, $T$-periodic solutions of the systems (\ref{cilindricas 2}) define \textit{reduced-periodic} solutions of the $2N$-body problem (\ref{cilindricas}). That is, solutions with the property that every $T$ units of time, the positions and velocities of the $2N$-bodies only differ by an rigid motion in $\R^{3}.$

Let us present some useful results on the existence of symmetric periodic solutions of $(\ref{cilindricas})$. To this end, let $r_{0},m\in \R^{+}$ 
and consider the following initial value problem 
\begin{equation}\label{ivp}
\begin{cases}
\begin{split}
\ddot{r} &= F(a,r,d), \quad r(0)=r_{0}, \quad \dot{r}(0)=0,\\
\ddot{d} &=	G(r, d) \quad \quad d(0)=0, \quad \dot{d}(0)=b.\\
\end{split}
\end{cases}
\end{equation}
%\frac{a^2}{r^3} - 2mr f(r, d), 
%-\frac{m\,d}{2} g(r, d), 
with
\[
	F(a,r,d) = \dfrac{a^2}{r^3} - 2rm f(r, d), \quad \text{and} \quad	G(r, d)  = -\frac{m d}{2} g(r, d).
\]
%\begin{align*}\label{eq-r,d}
%	F(a,r,d) & = \dfrac{a^2}{r^3} - 2rm f(r, d), \\
%	G(r, d) & = -\frac{m d}{2} g(r, d).
%	%G_3(r, d,z) &= -m N \left[ \frac{z - d}{((z - d)^2 + r^2)^{3/2}}+ \frac{z + d}{((z + d)^2 + r^2)^{3/2}}\right].\nonumber
%\end{align*}
%
Recall that, we are denoting the solutions of (\ref{ivp}) as $t\to R(a,b,t)$ and $t \to D(a,b,t)$.  Moreover, it is easy to check the following symmetries: 
\[
F(a,r,d)=F(a,r,-d),\quad \text{and} \quad G(r,d)=-G(r,-d)
\]

%From now on, we define
%

%\begin{definition} For any For any pair of any real numbers $a$ and $b$, we define $R(a,b,t)=r(t)$ and $D(a,b,t)=d(t)$ and $\Theta(a,b,t)=\theta(t)$ where $r(t)$, $d(t)$ and $\theta(t)$ solves the initial value problem (\ref{ivp}).
%	\end{definition}

%
From previous symmetries it is clear that $R(a,b,t)$ is even and $D(a,b,t)$ is odd. Moreover, we have the following result.
\begin{lemma}\label{lem1}
Let $t \to R(a,b,t)$ and $t \to D(a,b,t)$ be a solution of (\ref{ivp}).
\begin{itemize}
	\item[$\triangleright$] If for some $0<T$ we have 
	\[
	\textbf{(I)} \quad R_{t}(a,b,T)=0, \quad D_{t}(a,b,T)  = 0,
	\]
    then $R(a,b,t)$, $D(a,b,t)$ are even functions respect to the line $t=T$. Moreover, both functions are $4T$-periodic.
    \item [$\triangleright$] If for some $0<\tilde{T}$ we have 
    \[
    \textbf{(II)} \quad R_{t}(a,b,\tilde{T})=0, \quad D(a,b,\tilde{T})  = 0,
    \]
    then with respect to the line $t=\tilde{T}$, $D(a,b,t)$ is an odd function  and $R(a,b,t)$ is an even function. Moreover, both functions are $2\tilde{T}$-periodic.
\end{itemize}
\end{lemma}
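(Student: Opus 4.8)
The plan is to exploit the symmetry/uniqueness structure of the autonomous system (\ref{ivp}): because $F$ and $G$ do not depend on $t$, any time-translate or time-reflection of a solution is again a solution, so identities of the desired "even/odd about $t=T$" type can be obtained by comparing two solutions that satisfy the same data at $t=T$ and invoking uniqueness for the ODE. The key preliminary observation is the reflection identity stated just before the lemma: since $F(a,r,d)=F(a,r,-d)$ and $G(r,d)=-G(r,-d)$, the pair $(r(t),d(t))\mapsto (r(-t),-d(-t))$ maps solutions to solutions; combined with the initial conditions $\dot r(0)=0$, $d(0)=0$ this gives at once that $R(a,b,\cdot)$ is even and $D(a,b,\cdot)$ is odd, which I will record and then upgrade to reflections about an arbitrary line $t=T$.

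For part \textbf{(I)}: assume $R_t(a,b,T)=D_t(a,b,T)=0$. Define $\tilde r(t)=R(a,b,2T-t)$ and $\tilde d(t)=D(a,b,2T-t)$. A direct check shows $(\tilde r,\tilde d)$ solves the same second-order system (\ref{cilindricas 2}) — the reversal $t\mapsto 2T-t$ introduces two sign changes in each second derivative, which cancel, and the right-hand sides are $t$-independent. At $t=T$ we have $\tilde r(T)=R(a,b,T)$, $\tilde d(T)=D(a,b,T)$, and $\dot{\tilde r}(T)=-R_t(a,b,T)=0=R_t(a,b,T)$, likewise $\dot{\tilde d}(T)=-D_t(a,b,T)=0$; so $(\tilde r,\tilde d)$ and $(R,D)$ have identical position and velocity at $t=T$. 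By uniqueness of solutions of the IVP (the vector field is smooth away from collisions, which a hip-hop solution avoids), $\tilde r\equiv R$ and $\tilde d\equiv D$, i.e. $R(a,b,2T-t)=R(a,b,t)$ and $D(a,b,2T-t)=D(a,b,t)$ for all $t$: both are even about $t=T$. Now combine the two symmetries: evenness about $t=0$ gives $R(a,b,t)=R(a,b,-t)$ and evenness about $t=T$ gives $R(a,b,t)=R(a,b,2T-t)$, so $R(a,b,t+4T)=R(a,b,-(t+2T))=R(a,b,-(t+2T)+2T)=R(a,b,-t)=R(a,b,t)$ after one more application — more cleanly, the group generated by the reflections $t\mapsto -t$ and $t\mapsto 2T-t$ contains the translation $t\mapsto t+2T$ composed with nothing, giving period dividing $4T$; the same argument with oddness of $D$ about $0$ and evenness about $T$ again yields period $4T$. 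Part \textbf{(II)} is the same scheme: with $R_t(a,b,\tilde T)=0$ and $D(a,b,\tilde T)=0$, compare $(R,D)$ with $(R(a,b,2\tilde T-t),-D(a,b,2\tilde T-t))$; the reflection in the $d$-variable is now allowed because of $G(r,-d)=-G(r,d)$, and matching position and velocity at $t=\tilde T$ (using $D(a,b,\tilde T)=0$ for the position match in the $d$-slot and $R_t(a,b,\tilde T)=0$ for the velocity match in the $r$-slot) plus uniqueness gives $R(a,b,2\tilde T-t)=R(a,b,t)$ and $D(a,b,2\tilde T-t)=-D(a,b,t)$. Composing with the base symmetries about $t=0$ then forces $2\tilde T$-periodicity of both functions.

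The one genuine point that needs care — the main obstacle — is the appeal to uniqueness: one must know the solution exists on all of $\R$ and never hits a collision (where $f$, $g$ blow up), since otherwise "$4T$-periodic" is vacuous or the reflected solution is not defined. This is handled by noting that the reflected/translated function is a priori only a local solution near $t=T$, but the identity $\tilde r\equiv r$, $\tilde d\equiv d$ on the common interval of definition then shows the solution extends by reflection to a strictly larger interval; iterating, it extends to all of $\R$ and is automatically periodic, so no collision can occur in finite time. I would state this extension-by-reflection argument explicitly. Everything else is bookkeeping with the two commuting reflections, and I would present the period computation once and remark that the $D$-case is identical up to the sign bookkeeping already used in establishing oddness.
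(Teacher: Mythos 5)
Your proof is correct and is evidently the intended argument: the paper states Lemma \ref{lem1} without proof, and the symmetries $F(a,r,d)=F(a,r,-d)$, $G(r,d)=-G(r,-d)$ recorded immediately before it are exactly the ingredients of your reflection-plus-uniqueness scheme. The only cosmetic issue is the garbled chain in the period computation for part \textbf{(I)} (the clean statement, which you do give, is that evenness about $t=0$ and about $t=T$ yields $R(t+2T)=R(t)$, while oddness of $D$ about $0$ and evenness about $T$ yields $D(t+2T)=-D(t)$, hence $4T$-periodicity); otherwise the argument, including the extension-by-reflection remark, is complete.
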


\begin{definition} We say that a  solution of \eqref{ivp} is of \textit{type I} (\textit{type II}) if it satisfies condition $\textbf{(I)}$ (condition \textbf{(II)}) in Lemma \ref{lem1}. We also call the solutions of type I \underline{double symmetry solutions}.
\end{definition}

\begin{remark} If $(a,b,T)$ solve system \textbf{(I)} then $(a,b,2T)$ solve system \textbf{(II).}
\end{remark}

\vspace{0.2 cm}
For a given $N>1$ define the parameters
\begin{equation}\label{sumas}
	\alpha_{N}=\frac{1}{16}\sum_{k=1}^{2N-1} \frac{((-1)^{k}-1)^{2}}{\sin^{3}(\frac{k\pi}{2N})}, \quad \gamma_{N}= \frac{1}{4}\sum_{k=1}^{2N-1} \frac{1}{\sin(\frac{k\pi}{2N})}.
\end{equation}

%The following result establish an interesting relation between the sum $\alpha_{N}$ and $\gamma_{N}$. 
%The proof can be found in 

\begin{prop}\label{bound}
	For any integer $N>1$, the sum $\alpha_{N}$ and $\gamma_{N}$ defined in \eqref{sumas} satisfy
	\[
	\frac{\gamma_{N}}{\alpha_{N}}\leq \frac{4+\sqrt{2}}{8}.
	\]
\end{prop}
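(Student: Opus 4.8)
The plan is to bound the two sums $\alpha_N$ and $\gamma_N$ separately by exploiting the simple structure of the summand indices modulo $2$. Note that the factor $((-1)^k-1)^2$ equals $0$ when $k$ is even and $4$ when $k$ is odd, so only odd indices $k=2j-1$, $j=1,\dots,N$, contribute to $\alpha_N$; thus
\[
\alpha_N=\frac14\sum_{j=1}^{N}\frac{1}{\sin^3\!\big(\tfrac{(2j-1)\pi}{2N}\big)},
\]
while in $\gamma_N$ every index from $1$ to $2N-1$ contributes, and one may split that sum into the odd-index part (which, up to the constant, is essentially $\alpha_N$ with the exponent lowered from $3$ to $1$) and the even-index part. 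So I would write $\gamma_N=\tfrac14(S_{\mathrm{odd}}+S_{\mathrm{even}})$ with $S_{\mathrm{odd}}=\sum_{j=1}^{N}\csc\big(\tfrac{(2j-1)\pi}{2N}\big)$ and $S_{\mathrm{even}}=\sum_{j=1}^{N-1}\csc\big(\tfrac{j\pi}{N}\big)$.

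The key inequality is the termwise bound $\csc x\le \tfrac12\csc^3 x + \tfrac12\csc x\cdot(\text{something}\le 1)$ — more usefully, since $\sin x\le 1$ we have $\csc^3 x\ge \csc x$, so $S_{\mathrm{odd}}\le 4\alpha_N$ trivially; that alone gives $\gamma_N\le \alpha_N + \tfrac14 S_{\mathrm{even}}$, and the whole problem reduces to controlling $S_{\mathrm{even}}$ against $\alpha_N$. For $S_{\mathrm{even}}$ I would pair the term $j$ with the term $N-j$ (they are equal, $\csc(j\pi/N)=\csc((N-j)\pi/N)$) and compare each even-index reciprocal sine with a nearby odd-index one: for each even index $2\ell$ appearing in $\gamma_N$ there is an adjacent odd index whose sine is no larger (using monotonicity of $\sin$ on $[0,\pi/2]$ and symmetry about $\pi/2$), so each term of $S_{\mathrm{even}}$ is dominated by a corresponding term $\csc((2j-1)\pi/2N)\le \csc^3((2j-1)\pi/2N)$ counted in $4\alpha_N$. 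Tracking the combinatorics of this domination carefully should yield $S_{\mathrm{even}}\le c\cdot 4\alpha_N$ for an explicit constant, and assembling $\gamma_N\le \alpha_N(1+c)$ with the constants chosen to match $\tfrac{4+\sqrt2}{8}$.

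An alternative, cleaner route that I would actually try first: prove the single termwise inequality
\[
\frac{1}{\sin x}\le \frac{4+\sqrt2}{8}\cdot\frac{4}{\,(\text{the }\alpha_N\text{-summand assigned to }x)\,}
\]
is too optimistic termwise, so instead establish the ratio bound by a convexity/monotonicity argument in $N$: show the sequence $\gamma_N/\alpha_N$ is monotincreasing (or at least bounded) in $N$ and compute its limit. As $N\to\infty$ the sums become Riemann-type sums: $\alpha_N\sim \tfrac{N^3}{4\pi^3}\int$-type divergence while $\gamma_N$ diverges only like $N\log N$, so $\gamma_N/\alpha_N\to 0$; hence the supremum over $N>1$ is attained at the smallest value $N=2$. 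I would then simply evaluate $\alpha_2$ and $\gamma_2$ explicitly — at $N=2$ the relevant angles are $\tfrac{\pi}{4},\tfrac{\pi}{2},\tfrac{3\pi}{4}$, giving $\sin=\tfrac{\sqrt2}{2},1,\tfrac{\sqrt2}{2}$ — and check that the ratio equals exactly $\tfrac{4+\sqrt2}{8}$, which pins down why that constant appears.

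The main obstacle is the monotonicity-in-$N$ claim: comparing $\gamma_{N+1}/\alpha_{N+1}$ with $\gamma_N/\alpha_N$ is awkward because the two sums have different numbers of terms at shifted nodes, so a direct term-by-term comparison does not line up. I expect to handle this either by the crude termwise domination of the first route (losing a bit in the constant but needing to verify it still beats $\tfrac{4+\sqrt2}{8}$), or by establishing two-sided asymptotic/monotone estimates: a lower bound $\alpha_N\ge$ (something growing like $N^3$) from keeping just the extreme terms $j=1$ and $j=N$ where $\sin((2j-1)\pi/2N)=\sin(\pi/2N)\le \pi/2N$, and an upper bound $\gamma_N\le C N\log N$ from the standard estimate $\sum \csc(k\pi/2N)\le C N\log N$. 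Combining a clean $N=2$ base case with these growth estimates for $N\ge 3$ should close the argument.
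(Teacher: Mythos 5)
The paper itself does not prove this proposition; it simply invokes Lemma 1 of \cite{MR2570295}, so any self-contained argument is necessarily a different route. Your central observation in the second route is correct and is the real content of the statement: at $N=2$ the angles are $\pi/4,\pi/2,3\pi/4$, giving $\alpha_2=\sqrt2$ and $\gamma_2=(1+2\sqrt2)/4$, hence $\gamma_2/\alpha_2=(4+\sqrt2)/8$ \emph{exactly}; the constant is the $N=2$ value, and for large $N$ the ratio decays because $\alpha_N\ge\tfrac12\csc^3(\pi/2N)\ge\tfrac12(2N/\pi)^3$ (keeping only the two extreme odd terms and using $\sin x\le x$) grows like $N^3$, while $\gamma_N\le\tfrac14\bigl(2N H_{N-1}+1\bigr)$ (using $\sin x\ge 2x/\pi$ on $(0,\pi/2]$ and the symmetry $k\leftrightarrow 2N-k$) grows like $N\log N$.

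Two genuine gaps remain, however. First, your initial route is structurally incapable of succeeding: since $(4+\sqrt2)/8\approx 0.677<1$, no estimate of the form $\gamma_N\le\alpha_N+\tfrac14 S_{\mathrm{even}}\le(1+c)\alpha_N$ with $c>0$ can ever reach the target, because its right-hand side already exceeds $\alpha_N$. The termwise inequality $\csc x\le\csc^3 x$ is exactly the step that throws away the $N^3$ versus $N\log N$ disparity you need. Second, in the viable route, the inference ``$\gamma_N/\alpha_N\to0$, hence the supremum is attained at $N=2$'' is a non sequitur: you must control every intermediate $N$, not just the limit. Your proposed crude bounds do close the case $N\ge4$, since $\gamma_N/\alpha_N\le\pi^3(2NH_{N-1}+1)/(16N^3)$, which is decreasing in $N$ and equals about $0.474$ at $N=4$; but at $N=3$ the same bound gives about $0.718>0.677$, so $N=3$ must be evaluated exactly ($\alpha_3=17/4$, $\gamma_3=(5+4/\sqrt3)/4$, ratio $\approx0.43$) or estimated more sharply. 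With the explicit evaluations at $N=2,3$ together with the $N\ge4$ estimate, your second route becomes a complete and fully elementary proof; as written, it is not yet one.
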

\begin{proof}
	This results is a direct consequence of Lemma 1 in \cite{MR2570295}.
\end{proof}

\begin{remark}\label{rem 2}
 A direct computation shows that $D(a,0,t)=0$ for all $a,t\in \R.$ Moreover, if $a=a_{0}:=\sqrt{m\gamma_{N}r_0} $ then  $R(a_{0}, 0, t) =r_{0}$ for all $t \in \R$. Consequently, the points $(a_0,0,T)$ for all $T\in \R$ satisfies the two systems of equations  in three variables $\textbf{(I)}$ and $\textbf{(II)}$. We will call these solutions the \underline{trivial solutions} of $\textbf{(I)}$ and $\textbf{(II)}$.

\end{remark}

%with period 
%\[
%p_{0}=2\pi\sqrt{\frac{r_0^3}{m\gamma_N}},
%\]
%The next results shows a critical period, different from $p_{0}$ where emerge periodic solutions for the reduced problem.

%%%%%%%%%%%%%%%%%%%%%%%%%%%%%%%%%%%%%%%%%%%%%%%%%%%%%%%%%%%%%%%%%%%%%%%%%%%%%%%%%%%%%%%%%%%%%%%%%%%%%%%%%%%%%%%%%%%%%%%%%%%%%%

\section{Existence of periodic solutions}
In this section we state and prove the main theoretical result of this document.

%
%\begin{equation}
%\begin{split}
% R(t, a_{0}, 0) &=r_{0},\\
%\end{equation}
%\end{split}
%
%it holds for all $t$ circular orbits of  the primaries  with period  $p_{0}=\frac{2 \pi r_{0}}{a_{0}}$ and $a_{0}=\sqrt{\frac{\gamma_{N}}{r_{0}}}$.

\begin{theorem}\label{main 1}
 Let $N>1$ and $m,r_{0}>0$ fixed. Then there exists $\hat{b}>0$, and a pair of functions  $T, a:]-\hat{b},\hat{b}[ \to \R$, with 
 \[
 T(0)=\frac{\pi}{2}\sqrt{\frac{r_{0}^3}{m\alpha_{N}}}, \quad \text{and} \quad a(0)=\sqrt{m\gamma_{N}r_0},
 \]
such that $\displaystyle{R(a(b), b, t)=r(t)}$ and $D(a(b), b, t)=d(t)$  are $4T$-periodic functions. Moreover, the points $(a(b),b,T(b))$ solve the system \textbf{(I)}.
\end{theorem}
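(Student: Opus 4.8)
The plan is to solve the system \textbf{(I)} near the trivial solution by the real-analytic implicit function theorem, using $b$ as the free parameter and solving for $(a,T)$. Put $a_{0}=\sqrt{m\gamma_{N}r_{0}}$ and $T_{0}=\tfrac{\pi}{2}\sqrt{r_{0}^{3}/(m\alpha_{N})}$. There is one obstruction to a naive application: by Remark \ref{rem 2}, $D(a,0,t)\equiv 0$, hence $D_{t}(a,0,T)\equiv 0$ and the equation $D_{t}=0$ is degenerate along the whole line $\{(a_{0},0,T)\}$. To remove it, I would use the symmetries $F(a,r,d)=F(a,r,-d)$ and $G(r,d)=-G(r,-d)$, which force $R(a,-b,t)=R(a,b,t)$ and $D(a,-b,t)=-D(a,b,t)$; so $D_{t}$ is odd in $b$ and, by analyticity, $D_{t}(a,b,T)=b\,\widehat{D}(a,b,T)$ for an analytic $\widehat{D}$, even in $b$, with $\widehat{D}(a,0,T)=\partial_{b}D_{t}(a,0,T)$. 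Since $\{D_{t}=0\}=\{b=0\}\cup\{\widehat{D}=0\}$, it suffices to find $(a(b),T(b))$ solving $R_{t}(a,b,T)=0$ and $\widehat{D}(a,b,T)=0$.

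Next I would linearize at $b=0$. Let $v(t)=\partial_{b}D(a_{0},0,t)$. Differentiating $\ddot d=-\tfrac{m\,d}{2}g(r,d)$ in $b$ at $b=0$, where $d\equiv 0$ and $r\equiv r_{0}$, gives
\[
\ddot v=-\omega^{2}v,\qquad \omega^{2}=\tfrac{m}{2}g(r_{0},0)=\tfrac{m\alpha_{N}}{r_{0}^{3}},\qquad v(0)=0,\ \dot v(0)=1,
\]
so $v(t)=\omega^{-1}\sin\omega t$ and $\widehat{D}(a_{0},0,T)=\dot v(T)=\cos\omega T$, which vanishes precisely at $T_{0}=\tfrac{\pi}{2\omega}=\tfrac{\pi}{2}\sqrt{r_{0}^{3}/(m\alpha_{N})}$. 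Together with $R_{t}(a_{0},0,T_{0})=0$ from Remark \ref{rem 2}, this shows $(a_{0},0,T_{0})$ solves the reduced system, with the stated base values $a(0)=a_{0}$, $T(0)=T_{0}$.

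The heart of the argument — and the step I expect to be the main obstacle — is showing that the Jacobian of $(R_{t},\widehat{D})$ in $(a,T)$ is nonsingular at $(a_{0},0,T_{0})$. Since $R(a_{0},0,t)\equiv r_{0}$ we have $\partial_{T}R_{t}(a_{0},0,T_{0})=\ddot r=0$, while $\partial_{T}\widehat{D}(a_{0},0,T_{0})=-\omega\sin\omega T_{0}=-\omega\neq 0$; hence the Jacobian determinant equals $-\omega\,\partial_{a}R_{t}(a_{0},0,T_{0})=-\omega\,\dot w(T_{0})$, where $w(t)=\partial_{a}R(a_{0},0,t)$. Differentiating the $\ddot r$-equation in $a$ at $(a_{0},0)$ and using $a_{0}^{2}=m\gamma_{N}r_{0}$, $f(r_{0},0)=\tfrac{\gamma_{N}}{2r_{0}^{3}}$, $f_{r}(r_{0},0)=-\tfrac{3\gamma_{N}}{2r_{0}^{4}}$, $f_{d}(r,0)=0$, and $\partial_{a}D(a,0,t)\equiv 0$, the variational equation collapses to $\ddot w=\tfrac{2a_{0}}{r_{0}^{3}}-\nu^{2}w$ with $\nu^{2}=\tfrac{m\gamma_{N}}{r_{0}^{3}}$ and $w(0)=\dot w(0)=0$, so $w(t)=\tfrac{2a_{0}}{m\gamma_{N}}(1-\cos\nu t)$ and $\dot w(T_{0})=\tfrac{2a_{0}\nu}{m\gamma_{N}}\sin(\nu T_{0})$. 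The crucial observation is that $\nu T_{0}=\tfrac{\pi}{2}\sqrt{\gamma_{N}/\alpha_{N}}$, which by Proposition \ref{bound} lies in $\bigl(0,\tfrac{\pi}{2}\bigr)$; therefore $\sin(\nu T_{0})>0$, $\dot w(T_{0})\neq 0$, and the Jacobian is invertible. This is precisely the place where Proposition \ref{bound} enters.

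Finally, the real-analytic implicit function theorem — applicable since $F$ and $G$ are real-analytic on a neighbourhood of $r=r_{0}>0$, $d=0$ — produces $\hat b>0$ and analytic functions $a,T:\,]-\hat{b},\hat{b}[\,\to\R$ with $a(0)=a_{0}$, $T(0)=T_{0}$ and $R_{t}(a(b),b,T(b))=\widehat{D}(a(b),b,T(b))=0$. Via $D_{t}=b\widehat{D}$ (and $D_{t}(a,0,T)\equiv 0$ when $b=0$) this gives $R_{t}(a(b),b,T(b))=D_{t}(a(b),b,T(b))=0$, i.e. $(a(b),b,T(b))$ solves \textbf{(I)} for all $|b|<\hat b$, and shrinking $\hat b$ keeps $T(b)>0$. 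Lemma \ref{lem1} then shows $R(a(b),b,\cdot)$ and $D(a(b),b,\cdot)$ are $4T(b)$-periodic, which is the assertion.
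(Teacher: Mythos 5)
Your proposal is correct and follows essentially the same route as the paper: factoring $b$ out of $D_{t}$, locating $T_{0}$ from the linearized $d$-equation, computing the variational equation for $R_{a}$ along the circular solution, and invoking Proposition \ref{bound} to guarantee $\sin(\nu T_{0})\neq 0$ before applying the implicit function theorem. The only cosmetic difference is that you phrase the nondegeneracy as a $2\times 2$ Jacobian in $(a,T)$, whereas the paper checks that the cross product $\nabla R_{t}\times\nabla\hat{D}_{t}$ has a nonzero component; these are equivalent.
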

 
\begin{proof}
For fixed $m,r_0$, let $R(a,b,t)=r(t)$, $D(a,b,t)=d(t)$  be the solutions of \eqref{cilindricas 2}. By Lemma \ref{lem1} if for some $a,b$ and $T$ we have 
\begin{equation}\label{zeros}
R_{t}(a,b,T)=0, \quad \text{and} \quad D_{t}(a,b,T)=0,
\end{equation}
then $r(t)$ is even respect to $T$-axis whereas $d(t)$ is an odd function and both are $4T$-periodic. Let $\zeta(t)=(a_{0},0,t)$, $t\in \R.$ From Remark \ref{rem 2} it follows directly that
\[
R_{t}(\zeta(t))=0, \quad\text{and} \quad   D_t(\zeta(t))=0, \quad \forall t\in \R.
\]
In order to study possible bifurcations points in the system \eqref{zeros}, we search a particular value $t=T_{0}$ such that the set of vectors
\[
\Big\{\nabla R_{t}(\zeta(T_{0})), \nabla D_t(\zeta(T_{0}))\Big\},
\]
are linearly dependent. To this end, since $D(a,b,t)$ satisfies the initial value problem
\begin{equation}\label{equation d}
D_{tt}=-\frac{m D}{2}g(R, D), \quad D(0)=0, \quad \dot{D}(0)=b,
\end{equation}
from the existence and uniqueness theorem of ordinary differential equations we deduce that $\displaystyle{D(a,0,t)=0}$ for all $(t,a)$. From here, 
\begin{equation}\label{factorization}
D(a,b,t)=b\hat{D}(a,b,t), \qquad \forall (a,b,t)
\end{equation}
and
\begin{equation}\label{factorization}
D_t(a,b,t)=b\hat{D}_t(a,b,t), \qquad \forall (a,b,t)
\end{equation}
for some appropriate function $\hat{D}$. Moreover, direct computations shows that 
\[
D_{t}(\zeta(t))=D_{a}(\zeta(t))=0, \quad D_{b}(\zeta(t))=\hat{D}(\zeta(t)), \quad \forall t\in \R. %\partial_{s} \hat{D}(T,a,0)=\frac{1}{2}\partial_{sa} D(T,a,0).
\]
\[
D_{bt}(\zeta(t))=\hat{D}_{t}(\zeta(t)), \quad \forall t\in \R.
\]
Taking the partial derivative with respect to $b$ on both sides in \eqref{equation d}, the function $D_{b}(\zeta(t))$ satisfies the initial value problem
\[
\ddot{x}+\frac{m \alpha_{N}}{r_{0}^{3}}x=0, \quad x(0)=0, \quad \dot{x}(0)=1.
\]
Therefore, 
\begin{equation}\label{partialD equation}
D_{b}(\zeta(t))=\Big(\frac{m\alpha_{N}}{r^{3}_{0}}\Big)^{-1/2}\sin\Big[\Big(\frac{m\alpha_{N}}{r^{3}_{0}}\Big)^{1/2 }t\Big],
\end{equation}
and
\[
D_{bt}(\zeta(t))=\hat{D}_{t}(\zeta(t))=\cos\Big[\Big(\frac{m\alpha_{N}}{r^{3}_{0}}\Big)^{1/2 }t\Big]. 
\]
in consequence, 
\[
D_{t}(\zeta(T_{0}))=0, \quad \text{if} \quad T_{0}=\frac{\pi}{2}\sqrt{\frac{r^{3}_{0}}{m\alpha_{N}}}.
\] 
This shows that $\displaystyle{\nabla D_{t}(\zeta(T_{0}))=\textbf{0}}$. The previous computations, suggest to study the solutions of the system 
\[
 R_{t}(a,b,T)=0, \quad \text{and} \quad \hat{D}_{t}(a,b,T)=0,
\]
around the point $(a_{0},0,T_{0})$. 

Firstly, we need to compute  $R_{a}(\zeta(t))$, $R_{at}(\zeta(t))$ and $R_{b}(\zeta(t))$. For this purpose, let us recall that 
\[
R_{tt}=F(a, R, D), \quad R(0)=r_{0}, \quad R_{t}(0)=0.
\]
In consequence, taking the partial derivative with respect to $a$ on  both sides, direct computations shows that the function $u(t)=R_{a}(\zeta(t))$ satisfies the equation
\[
\ddot{u}=F_{a}(a_{0}, 0, r_{0})+ F_{R}(a_{0}, 0, r_{0}) u,
\]
where
\begin{equation}
\begin{split}
F_{R}(a, R, D) & =-3 \frac{a^{2}}{R^{4}}-2 m\left[f(R, D)+R f_{R}(R, D)\right], \\
F_{a}(a, R, D) & =\frac{2a }{R^{3}}.
\end{split}
\end{equation}
From here, direct computations shows that
\[
f(r_{0},0)=\frac{\gamma_{N}}{2r_{0}^{3}}, \quad \text{and} \quad f_{R}(r_{0},0)=\frac{-3\gamma_{N}}{2 r_{0}^4},
\]
% \sum_{k=1}^{2 N-1} \frac{1}{\sin(k \pi / 2 N)}
with $\gamma_{N}$ given in \eqref{sumas}. Therefore, $u(t)$ satisfies the initial value problem,
%
%\[
%\ddot{u}=\frac{2 a_{0}}{r_{0}^2}+\big[-3\frac{a_{0}^2}{r^{4}_{0}}+2\frac{m\gamma_{N}}{r^{3}_{0}}}\big]u, \quad u(0)=0, \quad \dot{u}(0)=0.
%\]
but $\displaystyle{a_0^2=m r_{0}\gamma_N}$, therefore
\[
\ddot{u}+w^{2} u=2w/r_{0}, \quad u(0)=0, \quad \dot{u}(0)=0,
\]
with $\displaystyle{w^{2}=m\gamma_{N}/r_0^3}$, where the solution is given by
\[
R_{a}(\zeta(t))=\frac{2}{w r_{0}}(1-\cos (\omega t)),
\]
implying that
\[
R_{at}(\zeta(t))=\frac{2}{r_{0}} \sin(\omega t).
\]
%\[
%\frac{d^{2}}{d t^{2}}(R_{a})=\frac{2 a_{0}}{r_{0}}-\frac{m}{4 r_{0}^{3}} \sum_{k=1}^{2 N-1} \frac{1}{\sin(k \pi / 2 n)} R_{a}.
%\]
%
%Let us define $w=\dfrac{a_0}{r_0}=\sqrt{\dfrac{\gamma_{N}}{r_0^3}}$, with this 
%
%\[
%\frac{d^{2}}{d t^{2}}\left(R_{0}\right)=2 w-w^{2} R_{a} \quad ; \quad R_{a}(0)=\dot{R}_{a}(0)=0
%\]
%
%where the solution of the initial value problem is
%\[
%R_{a}(\zeta(t))=\frac{2}{w}(1-\cos (\omega t))
%\]
%and from here
%\[
%R_{a t}(\zeta(t))=2 \sin(\omega t).
%\]
%

In the same fashion, the function $v(t)=R_{b}(\zeta(t))$ satisfies the differential equation
\[
\ddot{v}=F_{D}(a_{0},r_{0},0) D_{b}(\zeta(t))+F_{R}(a_{0},r_{0},0)v.
\]
Since $r(0)=r_{0}$ and $\dot{r}(0)=0$, the function $v(t)$ is the solution of the initial value problem
\[
\ddot{v}+w^2v=0, \quad v(0)=0, \quad \dot{v}(0)=0.
\]
In consequence,
\[
R_{b}(\zeta(t))=0, \quad \text{and} \quad R_{bt}(\zeta(t))=0, \quad \forall t.
\]

Finally, it follows directly that
\begin{equation*}
R_{tt}(\zeta(t)) =F\left(r_{0}, 0,a_{0}\right)=\frac{a_{0}^{2}}{r_{0}^{3}}-\frac{m\gamma_{N}}{r_{0}^{2}}=0 \quad \forall t.
\end{equation*}
In brief, the previous calculation provide for $\nabla R_{t}$ at $\zeta(T_0)$ that
\begin{equation}\label{nabla dot R}
\nabla R_{t}\big(\zeta(T_{0})\big) =\big(\frac{2}{r_{0}} \sin(\omega T_{0}), 0,0\big), 
\end{equation}

%begin{equation}
%\begin{split}
%\nabla \dot{R}(\zeta(T_{0})) &=(\partial^{2}_{tt}R(\zeta(T_{0})), \partial^{2}_{at}R(\zeta(T_{0})), \partial^{2}_{st}R(\zeta(T_{0})),\\
%&=(0,2 \sin(\omega T_{0}), 0)
%\end{split}
%\end{equation}
%
where $\displaystyle{\omega T_{0}=(\pi\sqrt{\gamma_{N}/\alpha_{N}}})/2$. From Proposition \ref{bound} it follows that $\omega T_{0}\neq p\pi$ for every positive integer $p$ implying that $\nabla R_{t}(\zeta(T_{0}))$ does not vanish.
Finally, we use the relation \eqref{factorization} to compute $\displaystyle{\nabla \hat{D}_{t}(\zeta(T_{0}))}$. Notice that, 
\[
D_{bt}(\zeta(t))=\hat{D}_{t}(\zeta(t)), \quad \text{and} \quad D_{ba}(\zeta(t))=\hat{D}_{a}(\zeta(t)).
\]
Then, from \eqref{partialD equation} we have
\[
\hat{D}_{t}(\zeta(T_{0}))=-1, \quad \text{and} \quad \hat{D}_{a}(\zeta(T_{0}))=0.
\]
Moreover
\begin{equation}\label{nabla D tilde}
\nabla \hat{D}_{t}(\zeta(t))=\Big(D_{tba}(\zeta(t)),\frac{1}{2}D_{bbt}(\zeta(t)),D_{btt}(\zeta(t))\Big).
\end{equation}
%

%
%\begin{equation}
%\begin{split}
%\omega T_{0} &=\sqrt{\frac{\gamma_{N}}{r_{0}^{3}}} \cdot \sqrt{\frac{r_{0}^{3}}{m \alpha_{N}}} %\pi, \\
%&=\sqrt{\frac{\gamma_{N}}{m \alpha_{N}}} \pi ,
%\end{split}
%\end{equation}
%

%
%\begin{remark}\label{barrabes}
%In \cite{} it is proved that...

%\[
%\frac{\alpha_{N}}{\frac{\gamma_{N}}{m}} \geqslant \frac{8}{4+\sqrt{2}} \Leftrightarrow \frac{4+\sqrt{2}}{8}  \geqslant \frac{\gamma_{N}}{m} \cdot \frac{1}{\alpha_{N}} 
%\]
%that is
%\[
%0.6 \% 77 .  \geqslant \frac{\gamma_{N}}{M} \cdot \frac{1}{\alpha_{N}}
%\]
%\end{remark}

%Then, by remark \ref{barrabes} $\sin(\omega T_{0}) \neq 0$ for $N>1$, which implies that $\nabla \dot{R}(\zeta(T_{0})$ does not vanish. 
%On the other hand, the gradient $\nabla \hat{D}(\zeta(t))$ is given by
%
%\[
%\nabla \hat{D}(\zeta(t))=\Big(D_{s t}(\zeta(t)), D_{s a}(\zeta(t)), \frac{1}{2}D_{ss}(\zeta(t))\Big),
%\]
%
%and for $t=T_{0}$, the previous computations shows that
%
%\begin{equation}\label{nabla D tilde}
%\nabla \hat{D}(\zeta(T_{0})) = \Big(-1, D_{s a}(\zeta(T_{0})), \frac{1}{2} D_{ss}(\zeta(T_{0}))\Big).    
%\end{equation}
%
To sum up, from expressions \eqref{nabla dot R} and \eqref{nabla D tilde} we obtain
%\begin{equation}
%\begin{split}
%\nabla \dot{R}(\zeta(T_0)) &= (0,2 \sin(\omega T_{0}), 0), \\
%\nabla D(\zeta(T_{0})) & = \left(-1, D_{sa}(\zeta(T_{0}), \frac{1}{2} D_{s s}(\zeta(T_{0}))\right).
%\end{split}
%\end{equation}
%
%Then 
%
\[
\Lambda:=\nabla R_{t}(\zeta(T_{0})) \times \nabla \hat{D}(\zeta(T_{0}))=\big(0, -2D_{btt}(\zeta(T_{0}))r_{0}^{-1}\sin(\omega T_{0}), D_{bbt}(\zeta(T_{0})) r_{0}^{-1}\sin(\omega T_{0})\big).
\]

Since,
\[
D_{btt}(\zeta(t))=\hat{D}_{tt}(\zeta(t))=-\Big(\frac{m\alpha_{N}}{r^{3}_{0}}\Big)^{1/2 }\sin\Big[\Big(\frac{m\alpha_{N}}{r^{3}_{0}}\Big)^{1/2 }t\Big],
\]
we have $\displaystyle{D_{btt}(\zeta(T_{0}/2))=-\pi/2T_{0}}.$ This shows that the second component of $\displaystyle{\Lambda}$ is different from zero. Then, by the Implicit Function Theorem, there exists $\hat{b}>0$, and a unique pair of functions  $T, a:]-\hat{b},\hat{b}[ \to \R$, such that 
\[
b\to T(b), \quad b\to a(b),
\]
for $b\in ]-\hat{b}, \hat{b}[$, with $T(0)=T_0$ and $a(0)=a_0$,
%it holds
%
%\[
%T_0= \pi\sqrt{\frac{r_0^3}{m\alpha_N}}, \quad a_0=\sqrt{\frac{\gamma_N}{r_0}}
%\]
%
such that
\[
R_{t}(\lambda(b))=0, \quad \hat{D}_{t}(\lambda(b))=0,
\]
%
%\begin{equation}
%\begin{split}
%\hat{D}(T(s), a(s), s) &= 0,  \quad \hat{D}(\lambda(s))=0 \\
%\dot{R}(T(s), a(s), s) &= 0, \quad \dot{R}(\lambda(s))=0
%\end{split}
%\end{equation}
%
with
$$
\begin{aligned}
\lambda:]-\hat{b}, & \hat{b}\left[\longrightarrow \mathbb{R}^{3}\right.\\
& b \longrightarrow \lambda(s)=(a(b),b,T(b)).
\end{aligned}
$$
Therefore, for each $b\in ]-\hat{b}, \hat{b}[$ it follows
\[
R_{t}(\lambda(b))=0, \quad \text{and} \quad  D_{t}(\lambda(b))=b\hat{D}_{t}(\lambda(b))=0.
\]
By Lemma \ref{lem1} we get that for any $b\in ]-\hat{b}, \hat{b}[$ the functions
\[
r(t)=R(a(b),b,t), \quad \text{and} \quad d(t)= D(a(b),b,t),
\]
provides a $4T(b)$-periodic solution of the reduced problem (\ref{cilindricas 2}) with $d(t)$ an odd function,  whereas $r(t)=R(a(b),b,t)$ and $d(t)=D(a(b),b,t)$ are both even respect to the line $t=T(b)$. 
\end{proof}

Due to Remark 1 we have the following result:

\begin{theorem}\label{main 2}
	Let $N>1$ and $m,r_{0}>0$ fixed. Then there exists $\hat{b}>0$, and a pair of functions  $T, a:]-\hat{b},\hat{b}[ \to \R$, with 
	\[
	T(0)=\pi\sqrt{\frac{r_{0}^3}{m\alpha_{N}}}, \quad \text{and} \quad a(0)=\sqrt{m\gamma_{N}r_0},
	\]
	such that $\displaystyle{R(a(b), b, t)=r(t)}$ and $D(a(b), b, t)=d(t)$  are a $2T$-periodic functions. Moreover, the points $(a(b),b,T(b))$ solve the system \textbf{(II)}.
\end{theorem}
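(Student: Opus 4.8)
The plan is to invoke Theorem \ref{main 1} together with the Remark stating that if $(a,b,T)$ solves system \textbf{(I)} then $(a,b,2T)$ solves system \textbf{(II)}. Concretely, Theorem \ref{main 1} furnishes $\hat b>0$ and functions $T, a\colon\,]-\hat b,\hat b[\,\to\R$ with $T(0)=\tfrac{\pi}{2}\sqrt{r_0^3/(m\alpha_N)}$ and $a(0)=\sqrt{m\gamma_N r_0}$ such that each point $(a(b),b,T(b))$ solves \textbf{(I)}. Setting $\tilde T(b):=2T(b)$, the Remark immediately gives that $(a(b),b,\tilde T(b))$ solves \textbf{(II)}; moreover $\tilde T(0)=2T(0)=\pi\sqrt{r_0^3/(m\alpha_N)}$ and $a(0)=\sqrt{m\gamma_N r_0}$, matching the values claimed in the statement. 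Thus the pair of functions $\tilde T, a$ on $]-\hat b,\hat b[$ is exactly what Theorem \ref{main 2} asserts to exist.

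Next I would translate the fact that $(a(b),b,\tilde T(b))$ solves \textbf{(II)} into the periodicity claim, using Lemma \ref{lem1}. Since $R_t(a(b),b,\tilde T(b))=0$ and $D(a(b),b,\tilde T(b))=0$, the second bullet of Lemma \ref{lem1} tells us that, with respect to the line $t=\tilde T(b)$, $D(a(b),b,t)$ is odd and $R(a(b),b,t)$ is even, and that both functions are $2\tilde T(b)$-periodic. This is precisely the periodicity assertion of Theorem \ref{main 2} (written there, slightly loosely, as ``$2T$-periodic'' with $T=\tilde T(b)$). One should double-check the bookkeeping of the factor of two: Theorem \ref{main 1} produces $4T(b)$-periodic solutions with half-period-type data at $t=T(b)$, and doubling the time parameter to $\tilde T(b)=2T(b)$ converts a quarter-period symmetry point into a half-period symmetry point, which is consistent with the $2\tilde T(b)=4T(b)$ period.

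For completeness I would also verify that the Remark's implication is genuinely applicable pointwise in $b$, i.e. that for each fixed $b\in\,]-\hat b,\hat b[$ the triple $(a(b),b,T(b))$ is an honest solution of system \textbf{(I)} in the sense used by the Remark (which it is, by the conclusion of Theorem \ref{main 1}), so that $(a(b),b,2T(b))$ solves \textbf{(II)} for that same $b$. Since the map $b\mapsto 2T(b)$ is continuous (indeed as regular as $T$, which the Implicit Function Theorem in the proof of Theorem \ref{main 1} guarantees), the resulting family is a bona fide branch of type II solutions emanating from the trivial solution at $b=0$.

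I do not expect a serious obstacle here: Theorem \ref{main 2} is essentially a corollary of Theorem \ref{main 1}, obtained by the elementary observation about the relationship between systems \textbf{(I)} and \textbf{(II)} recorded in the Remark. The only point requiring minor care is the constant bookkeeping (the value $T(0)=\pi\sqrt{r_0^3/(m\alpha_N)}$ is twice the value appearing in Theorem \ref{main 1}, and the period is correspondingly $2T$ rather than $4T$), and making sure the reader understands that the functions $T, a$ in the statement of Theorem \ref{main 2} are $2T_{\mathrm{old}}$ and $a_{\mathrm{old}}$ from Theorem \ref{main 1}; a one-line proof citing ``Theorem \ref{main 1} and Remark 1'' suffices.
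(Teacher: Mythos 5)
Your proof is correct and follows exactly the paper's own route: the paper deduces Theorem \ref{main 2} from Theorem \ref{main 1} by citing the remark that a solution $(a,b,T)$ of system \textbf{(I)} gives the solution $(a,b,2T)$ of system \textbf{(II)}, with precisely the factor-of-two bookkeeping ($T(0)$ doubled, $4T$-periodicity rewritten as $2T$-periodicity) that you spell out.
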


%\begin{remark}\label{rem3} The existence of periodic solutions of \textit{type I} for the reduced problem was found by looking for a solution of the system $R_{t}(a,b,T)=D_{t}(a,b,T)=0$ with $b\neq 0$ near to
%	zero. That is, near a trivial solution of the system $I$. a trivial solution. More precisely, we show the existence of periodic solutions $R(a,b,t)$, $D(a,b,t)$ with $a$ near to $a_0$ near the trivial solution $(a_0,0,\frac{\pi}{2}\sqrt{\frac{r_{0}^3}{m\alpha_{N}}})$. We could have done the same by searching for nontrivial solutions of the system $R_{t}(a,b,T)=D(a,b,T)=0.$ with $b0$, In this case, we will obtain periodic solutions of type II with period close to $\pi\sqrt{\frac{r_{0}^3}{m\alpha_{N}}}$.
%\end{remark}

%
%\begin{remark} The solutions founded in Theorem \ref{main 1}
% are called \textit{odd/even} because the symmetries of $d(t)$ respect to the origin and the $T$-axis respectively. We point that every odd/even solution of (\ref{cilindricas 2}) is also an odd-solution; which means a $2T_{*}$-periodic solution  of (\ref{cilindricas 2}) that satisfies 
%\[
%R_{t}(a,b,T_{*})=0, \quad \text{and} \quad  D(a,b,T_{*})=0.
%\]
%with $d(t)$ and odd-function and $T_{*}=2T_{0}$.
%
%\end{remark}

\section{Branches emanating from the bifurcations}

In this section we use analytic continuation to extend the branches emanating form the bifurcations points

$$p_0=\left(\, \sqrt{m \gamma_N r_0},\, 0,\, \frac{\pi}{2} \sqrt{\frac{r_0^3}{m\alpha_N}}\, \right)\quad \hbox{and}\quad q_0=\left(\, \sqrt{m \gamma_N r_0},\, 0,\, \pi \sqrt{\frac{r_0^3}{m\alpha_N}}\, \right),$$

obtained in Theorem \ref{main 1} and Theorem \ref{main 2}.

%The solutions in the branch emanating from $p_0$  satisfy the property    $R_{t}(a,b,T)=0$ and $D_{t}(a,b,T)=0$ while those in the branch emanating from $q_0$ have the property that $R_{t}(a,b,T)=0$ and $D(a,b,T)=0$. As explained in the introduction, near the bifurcation point $q_0$ we will not have essentially new solutions because any solution near the point $q_0$, in addition to the condition  $R_{t}(a,b,T)=0$ and $D(a,b,T)=0$, they also satisfy  $R_{t}(a,b,T/2)=0$ and $D_{t}(a,b,T/2)=0$,  so they are already counted as solutions emanating from the bifurcation point $p_0.$

Notice that the solutions from $p_{0}$, $q_{0}$ are essentially the same, all of them have double symmetry. With the purpose of searching  for solutions of type $\textbf{(II)}$ that are not of type $\textbf{(I)}$ we extend the branch starting at the point $ q_0$ by applying analytic continuation and a method similar to the one presented in \cite{MR2267950} to system \textbf{(II)}. Taking $N=3$, $r_0=2$ and $m=1$ we have

$$q_0=\left( \sqrt{\frac{5}{2}+\frac{2}{\sqrt{3}}},0,4 \sqrt{\frac{2}{17}} \pi\right) \approx (1.91173,0,4.31023).$$

%With the intension of searching  for solutions with only one symmetry, that is, solutions that satisfy that $R_{t}(a,b,T)=0$ and $D(a,b,T)=0$  and do not satisfy $R_{t}(a,b,T/2)=0$ and $D_{t}(a,b,T/2)=0$, we used analytic continuation with a method similar to the one presented in \cite{something} to extend the branch

 The analytic continuation method give us a table that we labeled DSP because, as explained above, near the point $q_0$, the solutions satisfy the double symmetry property.  We found a  bifurcation point --that we labeled $B$-- along this branch that gave us a new branch with single symmetric hip-hop solutions. Recall from the introduction that a solution is called single symmetric if $d(t)$  is even and there is not $T\neq 0$ such that $d(T-t)=d(T+t)$ for all $t$. Before we address the existence of this bifurcation point $B$, let us elaborate on some properties of the DSP branch.

Figure \ref{DSP} shows the table $ DSP =\{Q_1=q_0, Q_2,\dots, Q_{5806}=q_f\}.$ It is a list of points
in the $ a,b, T $ space that satisfies the conditions
$$     |R_{t}(Q_i)|<10^{-4}  \ \   |F(Q_i)|<10^{-4}   \ \ \   \text{for each $ Q_i \in DSP $}.    $$

For the solutions along the DSP branch the system of particles reaches the maximum vertical expansion (that is expansion in the $z$-direction) at the same time that it reaches a maximum contraction towards the $z$-axis. In other words, when the particles are at their maximum height they are also the closest they can be to the $z$-axis. This is illustrated by the joint graph of $ d(t) $ and $ r(t) $ for the points $ Q_{210}$ and $Q_{4225}$   shown in Figure \ref{selectedpointsinDSP}.
% Include a row a graphs r(t), f(t) for three values of DSP.

%{\color{red} missing image}
%\includegraphics[width=12cm]{dsprandd.eps}
\begin{figure}[hbtp]
	\begin{center}\includegraphics[width=.8\textwidth]{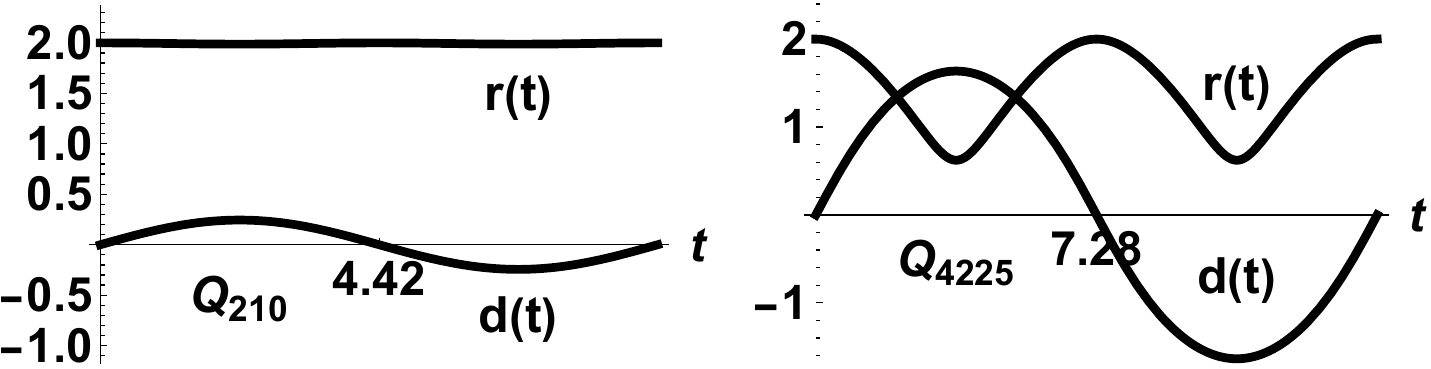}
		\caption{The graph of the functions $d(t)$ and $r(t)$ for two solutions in the branch DSP. Since the point $Q_{210}$ is close to the point $q_0$ then $r(t)$ is almost constant and the function $d(t)$ does not oscillate much. Recall that the solution $q_0$, $d(t)$ vanishes and $r(t)$ are constant. Both solutions show that after a quarter of a period of the function $d(t)$,  $r(t)$ reaches a minimum while $d(t)$ reaches a maximum. For this reason these solutions have an additional symmetry.} \label{selectedpointsinDSP}
	\end{center}
\end{figure}

\begin{figure}[hbtp]
	\begin{center}\includegraphics[width=.35\textwidth]{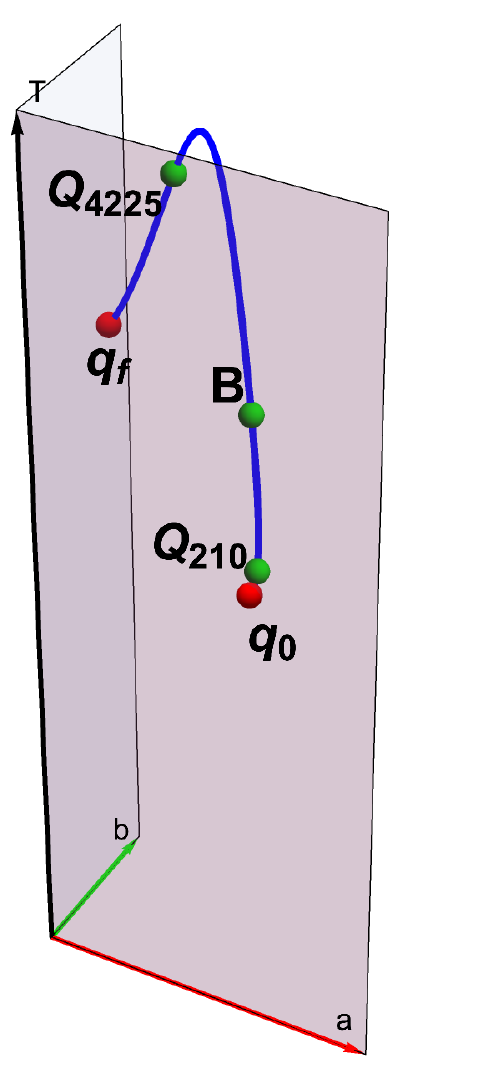}
		\caption{Points in the space $a$, $b$, $T$ found by doing analytic continuation near the bifurcation point $q_0$ provided by Theorem \ref{main 1}. All these points are associated with solutions that have double symmetry and for this reason we have labelled this family of solutions as DSP. We show the bifurcation point $B$, and the points $Q_{210}$ and $Q_{4225}$. We will show later that: $(i)$ we will have another family of solutions with not double symmetry emanating from the point $B$, $(ii)$ the point $Q_{210}$ will represent a solution where bodies 1, 3 and 5 share the same trajectory and $(iii)$ bodies 2, 4 and 6 also share the same trajectory. For the solution represented by the point $Q_{4225}$, all the six trajectories are different. } \label{DSP}
	\end{center}
\end{figure}

The branch DSP  ends at the point
$$q_f=(0.259786, 0.780202, 5.80955),$$
a point with angular moment $a$ near zero,  close to a solution that represents a collision, with three of the bodies colliding at maximum height. 
%Here are the graphs of $d(t) $ and $ r(t) $ for this extreme point of the branch DSP.
\begin{figure}[hbtp]\label{cg}
	\begin{center}\includegraphics[width=.35\textwidth]{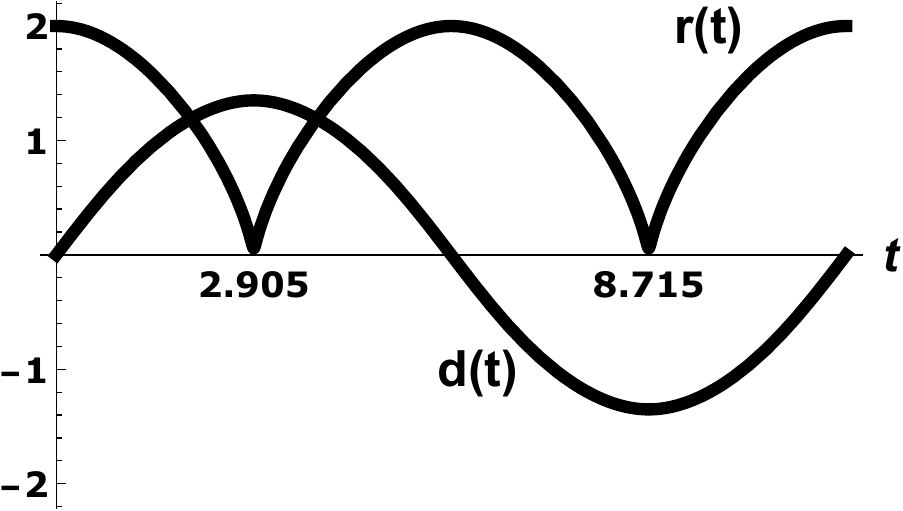}
		\caption{This is the graph of the functions $d(t)=D( 0.259786, 0.780202, t)$ and $ r(t)=R(0.259786, 0.780202, t)$ which is the solution associated with point $q_f=Q_{5806}$ located at the end of the branch that starts at the point
			$q_0$. As we can see, this solution has double symmetry, both functions have derivatives vanishing at $\frac{0.580955}{2}$. We can also see how this solution is near a solution with a collision of $3$ bodies near the point $(x,y,z)=(0,0,1.39)$  and the collision of the other $3$ bodies near the point $(x,y,z)=(0,0,-1.39).$}
	\end{center}
\end{figure}
The name for the set $DSP$ stands for ``double symmetry points'' because each one of these points represent a solution with period $2T$ with $d $ and $r$ even with respect to $t=T/2$, $r$ even with respect to $t=T$ and $d$ odd with respect to $t=T$.

\subsection{Bifurcation point along the DSP branch}

By the implicit function theorem, we have that as long as the vectors $\nabla D(a^0,b^0,T^0)$ and  $\nabla R_{t}(a^0,b^0,T^0)$ are linearly independent for points $(a^0,b^0,T^0)$ that satisfy

\begin{equation}\label{sys1}
	\begin{cases}
		\begin{split}
			R_{t}(a,b,T)=& \, 0,\\ 
		F(a,b,T)=& \, 0,
	\end{split}
\end{cases}
\end{equation}

then, the solution of the system (\ref{sys1}) is given by a smooth curve (not bifurcation points near that point) near $(a^0,b^0,T^0)$. We have noticed that at the point $B=(1.34958, 0.727361, 7.05373)$, which is one of the points in the set $DSP$, satisfies that

$$\nabla D(B)=(4.58986, 17.2712, -0.727943),\quad \nabla R_{t}(B)=(1.44703, 5.44591, -0.229381),$$

and

$$\nabla D(B)\times \nabla R_{t}(B)=(0.0026399, -0.000527769, 0.0041234).$$

The information above provided numerical evidence of the possible existence of a point in the branch where the gradients $\nabla D(a,b,T)$ and  $\nabla R_{t}(a,b,T)$ are linearly dependent. Therefore we search for solution of system \eqref{sys1} near $B$ but away from the smooth curve suggested by the points in the set $DSP$. Indeed we were able to find a point that satisfy the system \eqref{sys1}, away from the trajectory of the points in DSP but near the point $B$. After doing analytic continuation to this new point, we were able to find the collection of points

$$SSP=\{W_1, W_2,\dots, W_{14154}\},$$

all of them satisfying $|R_{t}(W_i)|<10^{-4}$ and $|F(W_i)|<10^{-4}$. The name $SSP$ stand for ``single symmetry points'' because the solutions associated with these points do not satisfy that $R_{t}(a,b,T/2)=0$ and $F(a,b,T/2)=0$. Figure \ref{SSP} shows how the two branches and the point $B$.

Therefore, we have numerically found solution with only one symmetry. 

\begin{figure}[hbtp]
	\begin{center}\includegraphics[width=.15\textwidth]{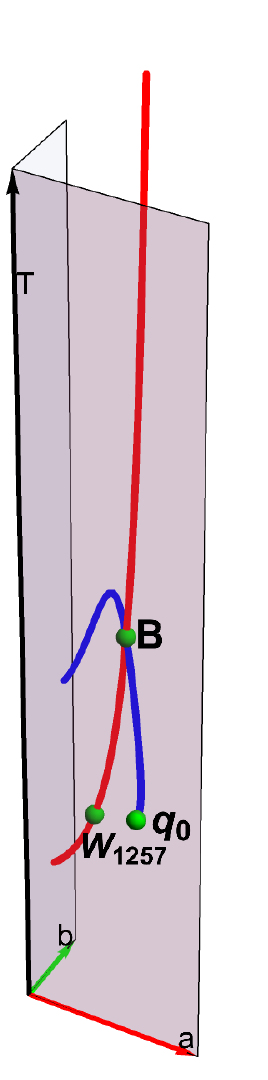}
		\caption{Points of the two branches SSP and DSP. The branch DSP starts at $q_0$ while the branch SSP emanated from point that satisfies the system (\ref{sys1}) near $B$ and away from the trajectory DSP. We have highlighted the point $W_{1257}$ because this point represent a periodic solution where the bodies 1, 3 and 5 share the same trajectory and the bodies 2, 4 and 6 also share the same trajectory.} \label{SSP}
	\end{center}
\end{figure}

%Add some note about the relative movement of the first maximum of height relative to the first minimum of radius. Show that as the point moves closer to the DSP branch the maximum of the height and minimum of radius to occur simultaneously while on one side one occurs before the other....
The solutions that correspond to  the  SSP  branch are characterized by the fact that the maximum contraction toward the $z$-axis occurs when the system is still expanding (for the points before the point of bifurcation)  or already contracting (for the points after the point of bifurcation)  in the vertical direction. This is illustrated in Figure \ref{gdrssp}.

%missing file

%{\color{red} missing image}

\begin{figure}[hbtp]
	\begin{center}\includegraphics[width=.8\textwidth]{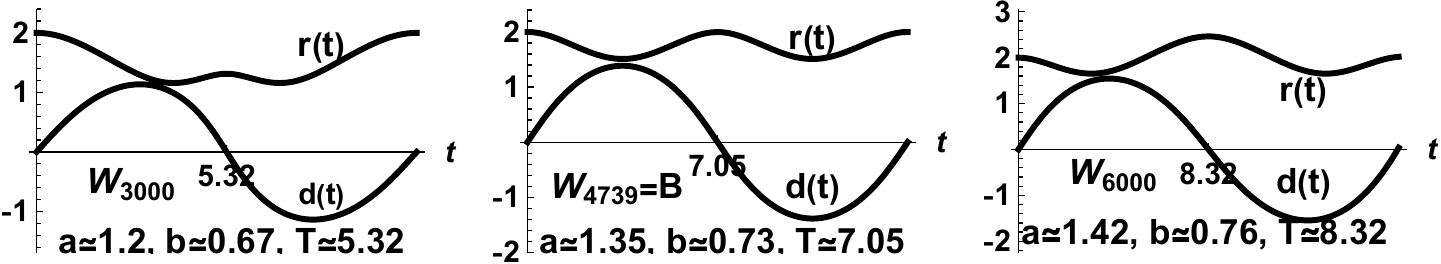}
		\caption{The image on the left shows the graph of the solutions $r(t)$ and $d(t)$ associated with the point $W_{3000}\in$ SSP. We can see how the maximum of $d(t)$ is obtained before the minimum of the function $r$. This means that when the bodies reaches the maximum vertical distance, they continue to move closer to the $z$-axis. The image in the center shows the graph of the solutions $r(t)$ and $d(t)$ associated with the bifurcation point. These two functions have double symmetry. The image on the right shows the graphs of the solutions $r(t)$ and $d(t)$ associated with the point $W_{6000}\in$ SSP.   We can see how the maximum of $d(t)$ is obtained after the minimum of the function $r$. } \label{gdrssp}
	\end{center}
\end{figure}

\subsection{On the number of trajectories and choreographic solutions}

In theory, the six bodies of the periodic solutions can follow $6, 3, 2, $ or one single trajectory. In the last case, when all the bodies follow  the same orbit, the solution is called a choreography. In this subsection, we show examples of the  four cases.

If we label the bodies 1, 2, 3, 4, 5, 6 according to the order of their projections on the $x$-$y$ plane in the direction of rotation about the $z$-axis then the number of trajectories is determined by which of the starting positions will be reached first by the body number 1. More precisely,

Let  $ 1 \rightarrow j $ denote the fact that for some instant of time $t > 0 $  the condition  

\[ C(t,k):     r_1(t)  =  r_k(0) , \ \text{and} \ \   \dot{r}_1(t) = \dot{r}_k(0),\]

is met by $k = j$ and that for every $ 0 < \tau < t  $ and all $ k \in \{1, 2,..., 6\}$   the condition  $  C(\tau, k) $ is false.

Then the number of trajectories are:

Six in the case  $  1  \rightarrow  1 $

One in the case  $  1  \rightarrow  2  $   or  $  1  \rightarrow 5.$

Two in the case  $ 1  \rightarrow  3  $

Three in the case  $  1 \rightarrow 4.$
\subsubsection{Number of orbits for solutions on the branch $DSP$.}
For the solutions in the branch $DSP$ we have that the number of trajectories of  the solution depends on $\theta_0=\theta(T)$: the angle of rotation of the solution after half of the common period of the functions $r(t)$ and $d(t)$. Since for every solution on this branch, the function $r$ is even with respect to $t=T/2$, then we have that $r(T)=r(0)=2$. We also have that for any integer $k$,  $\theta(kT)=k\theta_0$. Notice that when the solution is periodic, we can find three integers $k,j$ and $l$ that satisfy the following integer equation

\begin{eqnarray}\label{integereq}
	IE(k,j,l): k\theta_0=j\frac{\pi}{3}+2\pi l.
\end{eqnarray}

Recall that $d(kT)=0$ for every integer $k$ and $\dot{d}(kT)<0$ if $k$ is odd and $\dot{d}(kT)>0$ if $k$ is even. Also notice that $k\theta_{0}=j\frac{\pi}{3}+2\pi l$ for some integer $k,j$ and $l$ indicates that the body starting at $(r_{0},0,0)$ moves, after $kT$ units of time to the initial position of the body starting at $(r_{0}\cos(j\frac{\pi}{2}),r_{0}\sin(j\frac{\pi}{2}),0)$. Each body rotates $j\frac{\pi}{3}$ radians during their first $kT$ units of time.

For a periodic solution, let us denote $k_0$ smallest positive integer such that $IE(k_0,j,l)$ is satisfied for a pair of integers $j$ and $l$ with $j>0$ and $k_0+j$ even. We denote by $j_0$ the smallest positive integer with $j_0+k_0$ even such that $IE(k_0,j_0,l)$ is satisfied for some integer $l$. We have that if $j_0$ is $1$ or $5$, then the solution is a choreography, this is, we have only one trajectory that is share by the six bodies. If $j_0$ is $2$ or $4$ then, there are two trajectories. If $j_0$ is $3$, then there are three trajectories and if $j_0$ is $6$ then, there are $6$ trajectories.

As explain above,  the angle $\theta_0(T)$ plays an important role determining the number of trajectories. Figure \ref{thetaDSP} shows how the angle $\theta_0$ changes for different values of $T$ in the branch $DSP$. Since the first point in $DSP$ is near a trivial solution, we can compute the starting value for these curve. In this particular case, it is $\sqrt{\frac{1}{51} \left(4 \sqrt{3}+15\right)} \pi\approx 2.06$.
\begin{figure}[hbtp]
	\begin{center}\includegraphics[width=.35\textwidth]{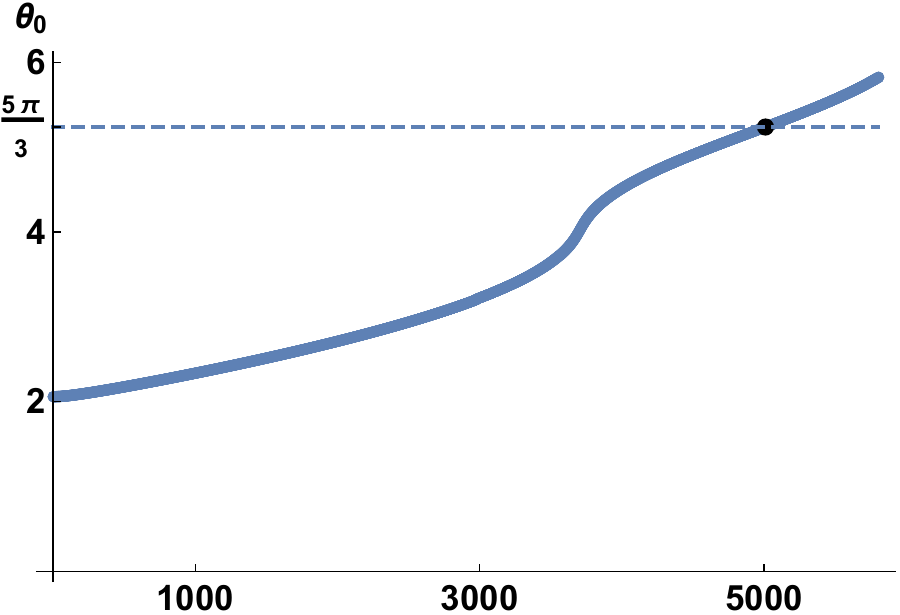}
		\caption{Graph of the points $(i,\theta_0(T_{i}))$ where $T_{i}$ the third entry of $Q_{i}$ in the branch $ DSP =\{Q_1=q_0, Q_2,\dots, Q_{5806}=q_f\}$. This graph points out the fact that $5\pi/3$ is in the range of the function $\theta_{0}$. } \label{thetaDSP}
	\end{center}
\end{figure}

%The solution SDP[5010], a choreography, 1-->6
As pointed out in Figure \ref{thetaDSP}, $5\pi/3$ is in the range of the function $\theta$ as a function of $T$. When $T$ moves along the smooth curve of points $(a,b,T)$ extending the points in DSP. The closest value of $\theta(T)$ to $5\pi/3$ in the branch $DSP=\{Q_1\dots,Q_{5806}\}$ happens for the solution $5010$, we have that $\theta(T_{5009})<5\pi/3<\theta(T_{5010})$ and $\theta(T_{5010})-5\pi/3<0.00033$. The intermediate value theorem shows that there is choreographic in the family of solution with double symmetry. Let us explain the orbit of this choreographic, assuming for practical reasons that it is given by the solution 5010 in the branch DSP. The initial configuration of the six bodies is displayed in Figure \ref{initialpos}

\begin{figure}[hbtp]
	\begin{center}\includegraphics[width=.36\textwidth]{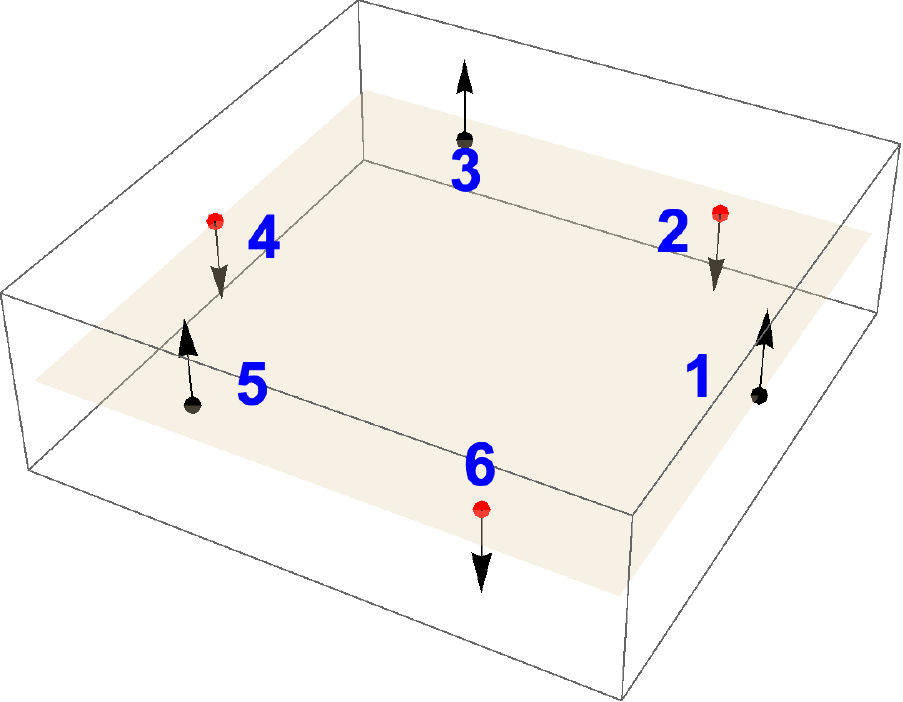}
		\caption{Labelling of the bodies. Initially they form a regular dodecagon.}\label{initialpos}
	\end{center}
\end{figure}

The arrows indicate if the bodies start going up or down. Notice that those bodies that go up, they do not do it vertically due to the rotation motion that all of them are doing. The point $Q_{5010}=(a,b,T)=(0.581691, 0.810807, 6.53465)$, this means that after $T=6.53465$ units of time, the body 1 will move to the initial position of body 6. Figure \ref{onetra} shows the rest of the details of this motion. In this case, in terms of the equation $IE(k,j,l)$ in \eqref{integereq} we have that $k_0=1$, $j_0=5$ and $l=5$.

\begin{figure}[hbtp]
	\begin{center}\includegraphics[width=.7\textwidth]{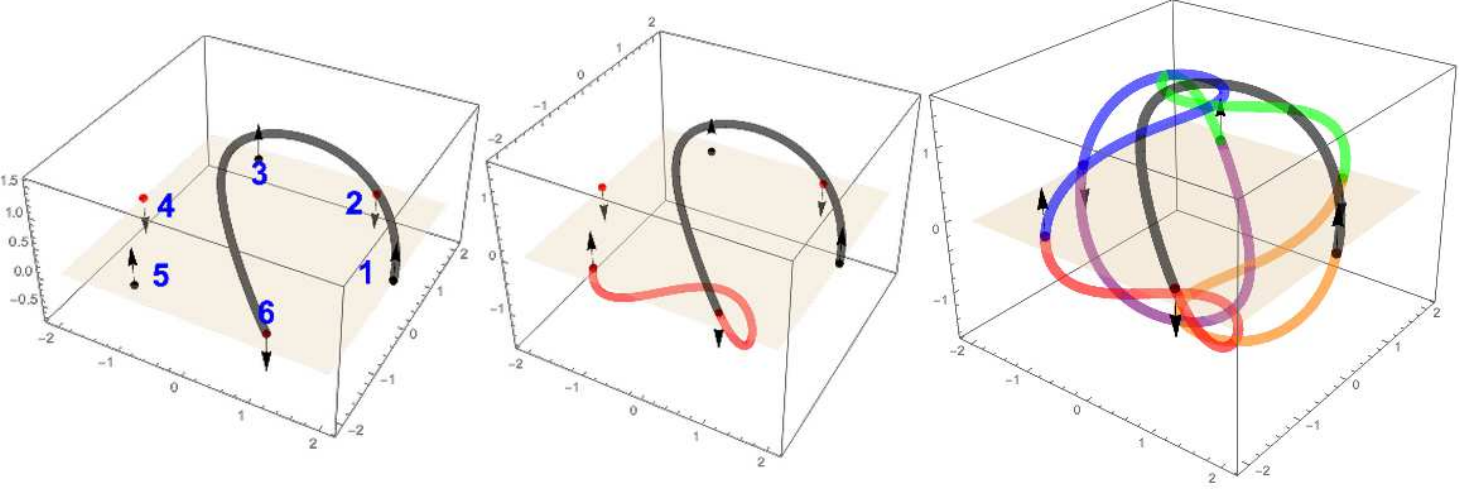}
		\caption{The first image shows the trajectory of the body 1 in the solution represented by $Q_{5010}$ for values of $t$ between $0$ and $T=6.53465$. The second image shows the trajectories of bodies 1 and 6 in the same interval of time. The third image show the trajectories of all 6 bodies in the same interval of time. This third image is also the trajectory of each one of the bodies between $t=0$ and $t=6T$.}\label{onetra}
	\end{center}
\end{figure}

%The solution SDP[2878], a choreography, 1-->4

In the same way, $5 \pi/3$ is in the range of $\theta_0(T)$, we can check that $\pi$ is in the range of this function. This time the closest value of $\theta(T)$ to $\pi$ in the branch $DSP$ happens for the solution represented with the point $Q_{2878}$, we have that $0<\pi-\theta_{0}(T_{2878})<0.00007$.  Let us explain the trajectory of this solution, assuming for practical reasons that the solution represented by $Q_{2878}$ in the branch DSP satisfies $\theta_0(T)=\pi$. The point $Q_{2878}=(1.37188, 0.717167, 6.95687)$, this means that after $T=6.95687$ units of time, the body 1 will move to the initial position of body 4. The rest of the orbits are explained in Figure \ref{theetra}.

\begin{figure}[hbtp]
	\begin{center}\includegraphics[width=.7\textwidth]{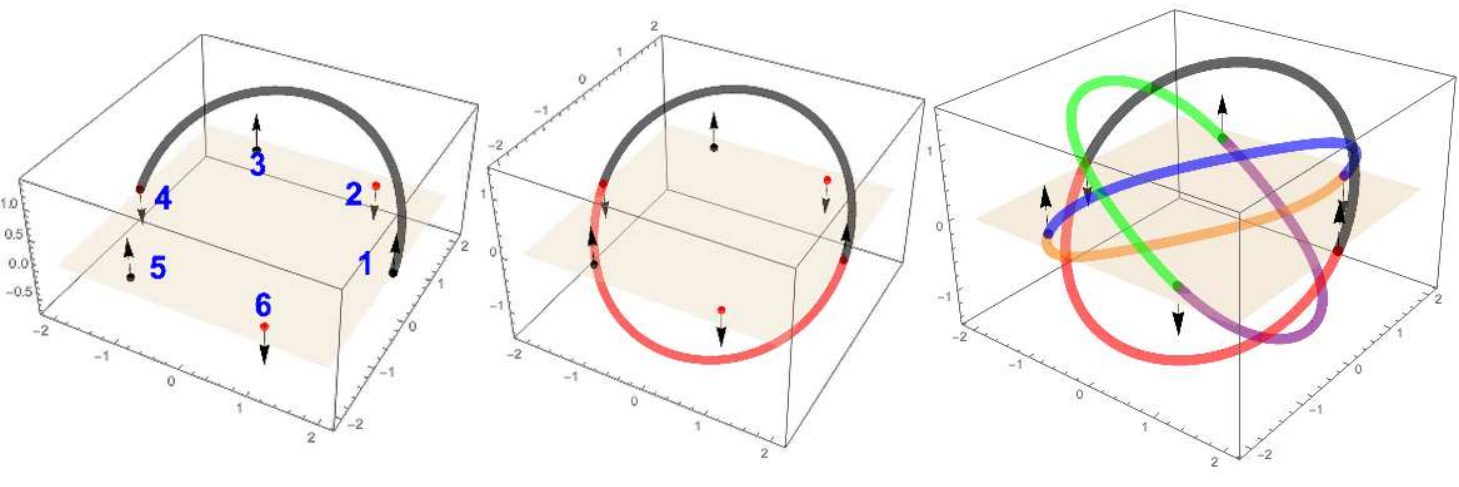}
		\caption{The image on the left shows the trajectory of the body 1 in the solution represented by $Q_{2878}$ for values of $t$ between $0$ and $T_{2878}=6.95687$ and the second image shows the trajectory in the same interval for the bodies 1 and 4. This second image represents the whole orbit for the first and fourth body. They share the same orbit. Bodies 2 and 5 share the same trajectory and the bodies 3 and 6 also share the same orbit. The third image shows the three trajectories.}\label{theetra}
	\end{center}
\end{figure}

The solution represented by $Q_{210}=(1.88461, 0.175173, 4.41712)$ provides a solution that has two trajectories. We have that $\theta_{0}(T_{210})\approx \frac{2\pi}{3}$. In this case after $T_{210}=4.41712$ the body one reaches the initial position of body 3 but it does not reach it with the right direction of the velocity, after $2T$ units of time, the body 1 reaches the initial position of body 5 with the same initial velocity as well. For this solution $k_0=2$, $j_0=4$ and $l=0$, see Figure \ref{twotra}.

\begin{figure}[hbtp]
	\begin{center}\includegraphics[width=.6\textwidth]{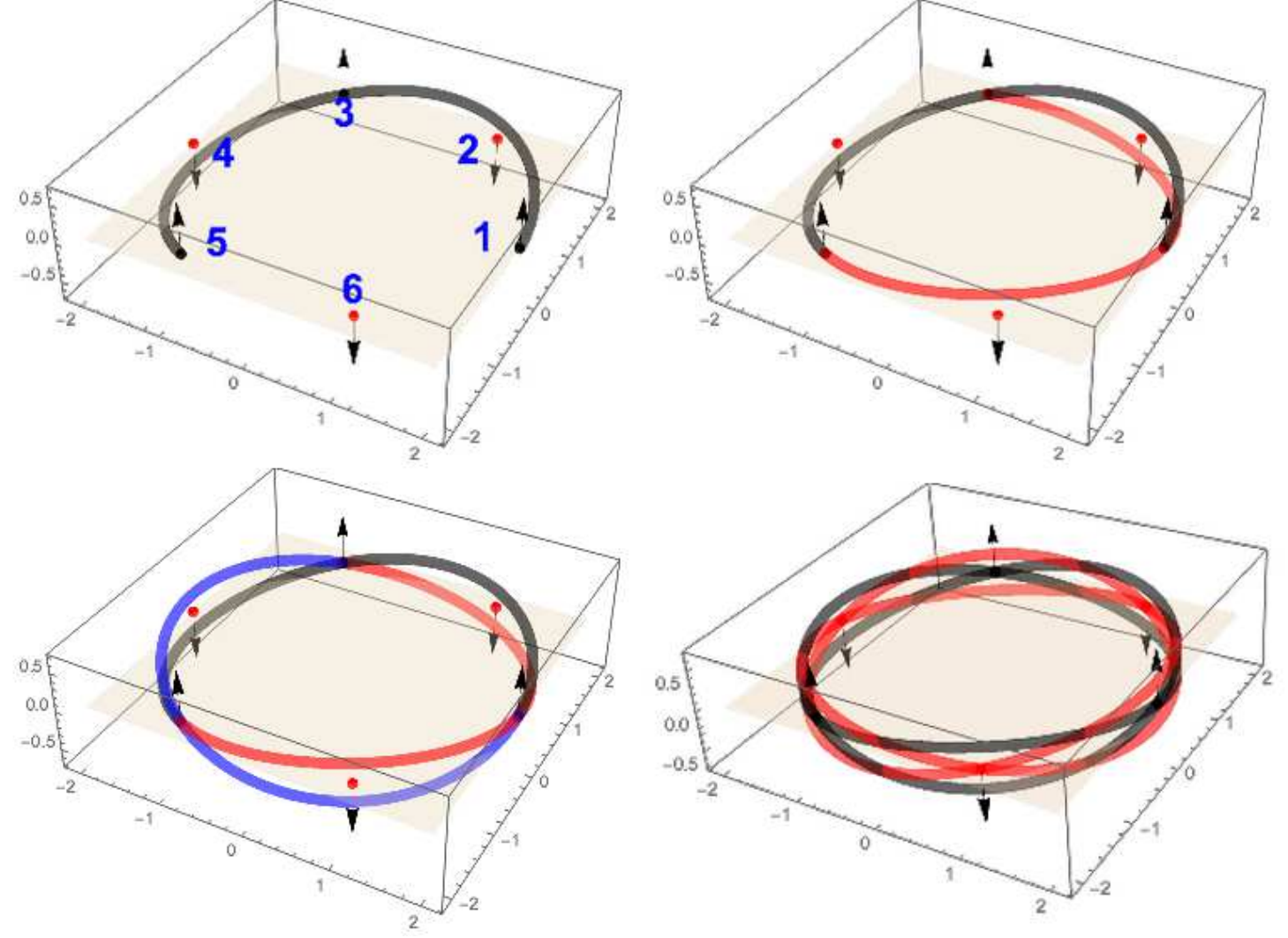}
		\caption{The first image shows the trajectory of the body 1 in the solution represented by $Q_{210}$ for values of $t$ between $0$ and $2T_{210}$ where $T_{210}= 4.41712$, the second image shows the trajectory in the same interval for the bodies 1 and 5 and the third image shows the trajectory in the same interval for the bodies 1, 5 and 3. This third image represent the whole orbit for the first, third and fifth body. They share the same orbit. The bodies 2, 4 and 6 also share the same trajectory. The fourth image shows the two trajectories.}\label{twotra}
	\end{center}
\end{figure}

%six trajectories
The solution represented by $Q_{4225}=(0.827163, 0.825182, 7.28011)$ provides a solution that has six trajectories. We have that $\theta_{0}(T_{4225})\approx \frac{3\pi}{2}$. In this case the body one reaches back its  initial position and velocity after $4T_{4225}$ units of time. Moreover, it never reaches the initial position and velocity of the other five bodies.  For this solution $k_0=4$, $j_0=6$ and $l=0$, see Figure \ref{sixtra}.

\begin{figure}[hbtp]
	\begin{center}\includegraphics[width=.78\textwidth]{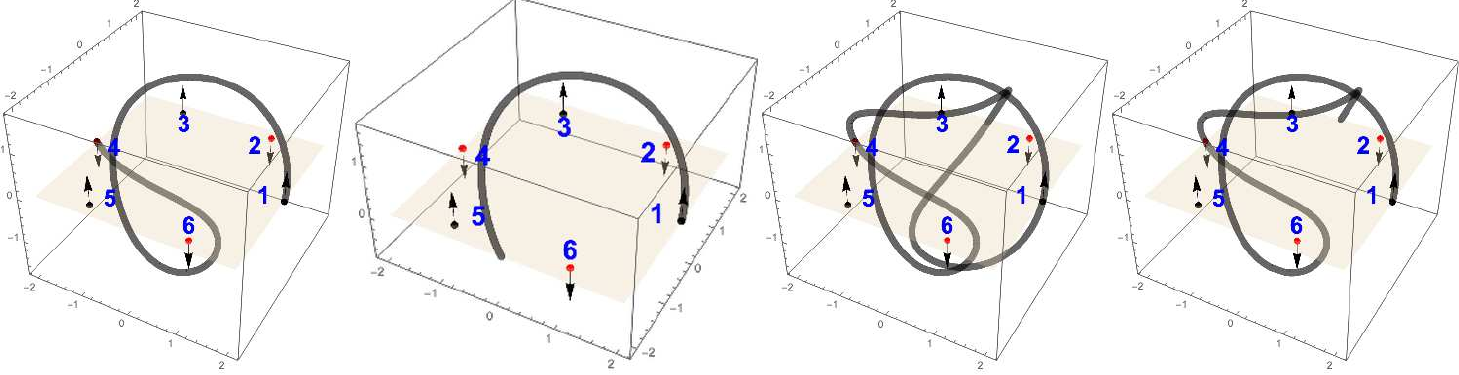}
		\caption{The second image shows the trajectory of the body 1 in the solution represented by $Q_{4225}$ for values of $t$ between $0$ and $T_{4225}=7.28011$. The first image shows the trajectory of the body 1 for values of $t$ between $0$ and $2T_{4225}$. Notice that even though the ending point of this portion of this trajectory is the starting position for the fourth body, the velocity is not the initial velocity of the fourth body. The fourth image shows the trajectory of the body 1 for values of $t$ bet\-ween $0$ and $3T_{4225}$ and the third image shows the trajectory of the body 1 for values of $t$ between $0$ and $4T_{4225}$. }\label{sixtra}
	\end{center}
\end{figure}

\subsubsection{Number of orbits for solutions on the branch SSP}

Before we start this subsection, let us point out the following:

\begin{prop} Let us assume that  a periodic solution  of the Hip-Hop problem is given by the functions $(d(t),r(t),\theta(t))$ satisfying the following condition:
	
	\begin{enumerate}
		\item
		$d(t)$ and $r(t)$ are periodic with period $2T$ for some  $T>0$,
		
		\item
		$d(t)=0$ only for values of $t=kT$ with $k$ an integer, 
		\item
		$r(kT)\ne r((k+1)T)$ for all  $k$ an integer,
		
	\end{enumerate}
	
	then, the solution cannot be a choreography.
	%then,  $n$ of the bodies share the same orbit and the other $n$ share a different one.
\end{prop}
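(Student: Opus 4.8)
The plan is to show that the three stated conditions on $(d,r,\theta)$ are incompatible with the combinatorial requirement for a choreography, namely that the body starting at $(r_0,0,0)$ must pass through the initial position \emph{and} initial velocity of every other body. The key observation is that conditions (1)--(3) force a rigid ``clock'' structure on the times at which body $1$ can possibly occupy one of the six starting configurations: such a coincidence can only happen at integer multiples of $T$.

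First I would recall, from the set-up in Section 2, that the six initial positions are $Q_k(0)=\mathcal R^{k-1}\mathbf r_1(0)$, all lying in the plane $\Pi_0=\{z=0\}$ (since $d(0)=0$), and with $|Q_k(0)|$ projected to the $z$-axis equal to $r_0=r(0)$, oriented $z$-coordinate $d(0)=0$, and radial velocity $\dot r(0)=0$. For body $1$ at time $t$ to match the initial data of \emph{some} body $k$, in particular the $z$-components must match: $d(t)=d_k(0)=0$ and $\dot d(t)=\dot d_k(0)=\pm b$ (the sign alternating with the parity of $k-1$ because of the reflection in $\mathcal R$). By condition (2), $d(t)=0$ forces $t=\ell T$ for some integer $\ell$. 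So every candidate matching time is of the form $\ell T$.

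Next I would use condition (3) together with the fact (established before the lemma, and in Lemma \ref{lem1}) that $r(t)$ is even about each line $t=kT$, hence $r$ takes only two values on the set $\{\ell T:\ell\in\Z\}$, namely $r(0)=r_0$ when $\ell$ is even and $r(T)$ when $\ell$ is odd, and $r(T)\ne r_0$ by (3). For body $1$ at time $\ell T$ to sit at a starting position, its distance to the $z$-axis must be $r_0$; this rules out odd $\ell$. Thus body $1$ can only reach a starting configuration at times $t=2\ell T$. But at $t=2\ell T$ one has $d(2\ell T)=0$, $\dot d(2\ell T)=+b$, $r(2\ell T)=r_0$, $\dot r(2\ell T)=0$ — exactly body $1$'s own initial state in the $(r,d)$ variables — so the only freedom left is the angular variable $\theta(2\ell T)$, which merely rotates the whole configuration. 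Matching the full position and velocity of body $k$ then requires $\mathcal R^{k-1}$ applied to body $1$'s state to agree with a pure rotation about the $z$-axis of body $1$'s state; since $\mathcal R^{k-1}$ involves the reflection $z\mapsto -z$ precisely when $k-1$ is odd, and body $1$'s $z$-velocity $b$ is nonzero (else $d\equiv 0$, contradicting (2) unless the solution is planar, which is not a choreography in the required sense — or can be excluded directly), the only bodies whose initial data body $1$ can ever attain are those with $k-1$ even, i.e. bodies $1,3,5$. Hence at least bodies $1$ and $2$ lie on distinct orbits, and the solution is not a choreography.

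The step I expect to be the main obstacle is making the ``$\dot d(t)=\pm b$ with the sign determined by parity, hence odd bodies are unreachable'' argument fully rigorous: one must carefully track how the reflection component of $\mathcal R$ acts on the velocity vector of body $1$ expressed in Cartesian coordinates, and confirm that $b\ne 0$ is genuinely forced by conditions (1)--(2) (a planar solution $b=0$ has $d\equiv 0$, which violates (2) since then $d(t)=0$ for all $t$, not only at $t=kT$). Everything else — the reduction of candidate times to multiples of $T$, and then to even multiples via the two-valuedness of $r$ on $T\Z$ — is a direct consequence of the evenness symmetries from Lemma \ref{lem1} and conditions (2)--(3), and should go through with only routine bookkeeping.
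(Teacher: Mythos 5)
Your proposal is correct and follows essentially the same route as the paper's proof: condition (2) restricts candidate matching times to integer multiples of $T$, condition (3) together with the $2T$-periodicity of $r$ excludes the odd multiples, and the sign reversal of the vertical velocity under the reflection part of $\mathcal{R}$ (together with $b\neq 0$, which is forced by (2)) rules out the even multiples. The paper states this as a contradiction for body $2$ alone and treats the velocity-sign step informally, whereas you additionally observe that only bodies $1,3,5$ are reachable, but the core argument is identical.
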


\begin{proof} Let us argue by contradiction. If we have a choreography, then there exist a time $\tilde{T}$ when the body 1 reaches the initial position and velocity of the body 2. By hypothesis (2) $\tilde{T}=kT$ for some integer $k$. On the other hand our two conditions on the function $r$, (period equal to $2T$ and $r(lT)\ne r((l+1)T)$ with $l$ an integer) implies that
	$r(lT)=r(0)$ if $l$ is even and $r(lT)\ne r(0)$ if $l$ is odd. Since $r(kT)=r_0$ then $k$ must be even. Since $d$ has period $2T$ then $\dot{d}(kT)=\dot{d}(0)$. This is a contradiction because the initial velocity of the body 2 goes in the ``opposite'' directions. If body 1 starts going up, the body 2 starts going down. 
\end{proof}

One feature of the solutions of the SSP branch (other than the one that corresponds to the point of bifurcation $B$)  is that any body intersect the plane $z=0$ only for values of $t=kT$ where $k$ is an integer. More importantly, the distance to the $z$-axis of  any of the bodies  changes when $t$ increases changes from $t=kT$ to $t=(k+1)T$, this is, $r(kT)\ne r((k+1)T)$. In particular the condition $ 1 \longrightarrow j $ never occurs if $ j $ is even. Therefore, in the SSP branch the only possible periodic solutions will have two trajectories with the bodies 1, 3, 5 sharing one orbit and  2, 4, 6  the other.  

In the equation \eqref{integereq} when $  1 \rightarrow 3 $ we have $ j = 2 $ and when $  1  \rightarrow 5 $ we have  $ j = 4. $  We set $k = 2q, j = 2p $ in the condition  $ IE(k, j, l) $  

\begin{eqnarray}
	IE(k,j,l): 2 q \theta_0=2 p \frac{\pi}{3}+ 2 \pi l,
\end{eqnarray}
that simplifies to  
\begin{eqnarray}\label{integereq2}
	q\theta_0=p\frac{\pi}{3}+\pi l,
\end{eqnarray}

where $ q $  and $ l $ are positive integers and $ p = 1 $ or $ p = 2. $

Assume  $ p = 1.$  The closest value of $ q \theta(T)$ to $\pi/3 + \pi l $ in the branch $SSP=\{P_1\dots,P_{14154}\}$ happens at  $P_{1257}=(0.886201, 0.557961, 3.61393)$, with $ q = l = 13.$  have that

\[ 13 \theta(T_{1257}) - \left( \frac{\pi}{3} +  13 \pi   \right)  < 0.000102,   \]
and 
\[ 13 \theta(T_{1258}) - \left( \frac{\pi}{3} +  13 \pi   \right)  >  -0.000133.   \]

The intermediate value theorem guarantees  that there is solution with single symmetry that has exactly two orbits with the bodies 1, 3, 5 in one and 2, 4, 6 in the other. The first image on the Figure between the abstract and the Introduction shows both trajectories. For practical purposes we assume that this solution is given by the solution 1257 in the SSP branch.  Figures \ref{ssp1} and \ref{ssp2} explain the trajectories in this particular solution.

%\begin{figure}[hbtp]
%	\begin{center}\includegraphics[width=.35\textwidth]{thetaSSP.eps}
%		\caption{Graph of $ 13 \, \theta_0(T)$  for the  solutions in the branch $SSP$. The horizontal segment on the top left underscores that $ \frac{\pi}{3} + 13 \pi $ is in the range of $ 13 \, \theta_0.$ } \label{thetaSSP}
%	\end{center}
%\end{figure}

%
\begin{figure}[h]
	\begin{center}\includegraphics[width=.75\textwidth]{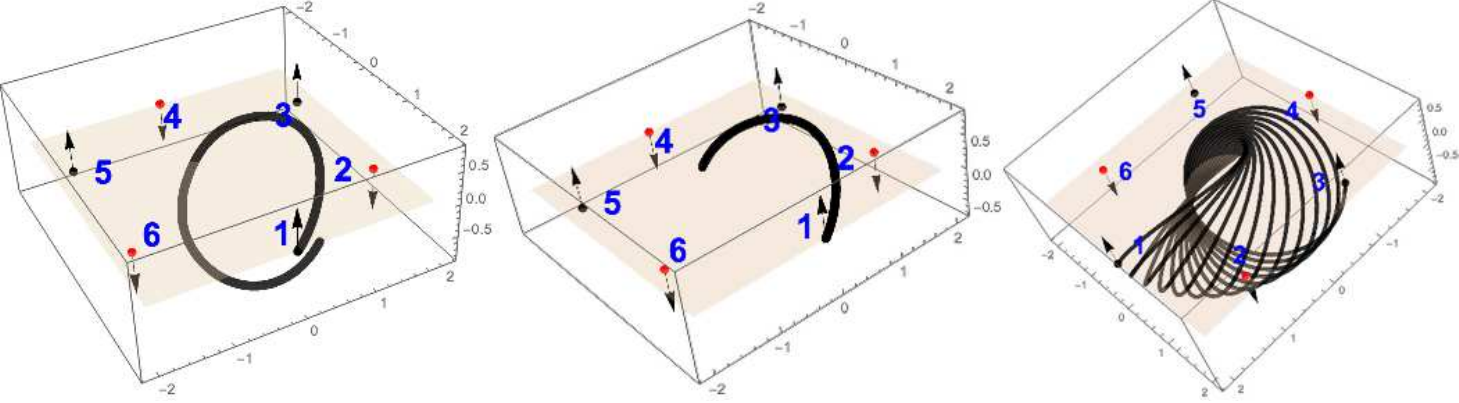}
		\caption{Images for the solution given by the point $P_{1257}$ in $SSP$. The second image shows the trajectory of body 1 from $t=0$ to $t=T_{1257}=3.61393$. Notice that the body comes back to the plane $z=0$ with a value of $r$ different from $2$. The first image show the trajectory of body. From $t=0$ to $t=2T_{1257}$. This time the end position of the body is 2 units away from the $z$-axis. The third image shows the trajectory of body 1 from $t=0$ to $t=26 T_{1257}$. We can see how the trajectory of the body ends at the initial position of body 3.} \label{ssp1}
	\end{center}
\end{figure}

\begin{figure}[h]
	\begin{center}\includegraphics[width=.75\textwidth]{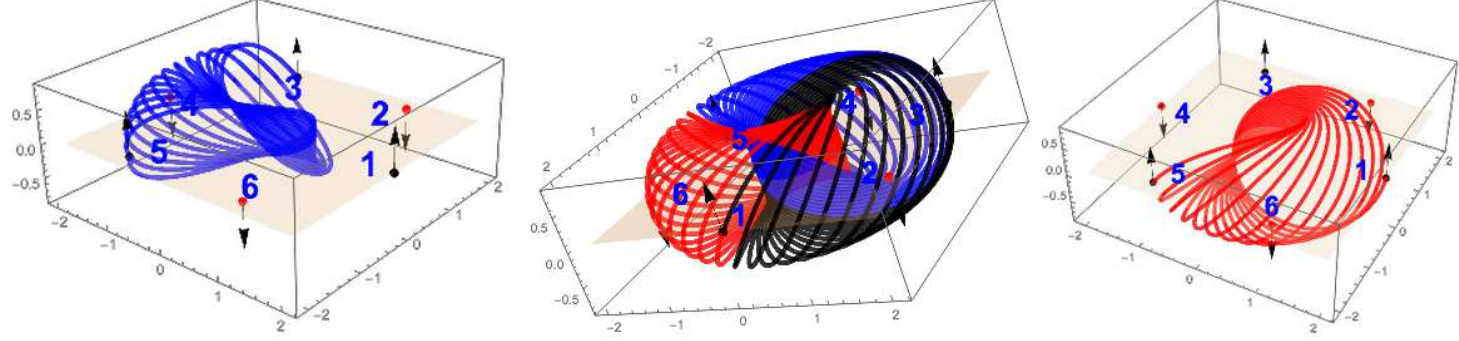}
		\caption{The first image shows the trajectory of body 1 from $t=26 T_{1257}$ to $t=52 T_{1257}$. We can see how the trajectory of the body ends at the initial position of body 5. This trajectory agrees with the trajectory of body 3 from $t=0$ to $t=26 T_{1257}$. The third image shows the trajectory of body 1 from $t=52 T_{1257}$ to $t=78 T_{1257}$. The second image shows the trajectory of body 1 from $t=0$ to $t=78 T_{1257}$. This is a closed embedded curve which is also the trajectory of bodies 3 and 5.} \label{ssp2}
	\end{center}
\end{figure}

\begin{bibdiv}
	\begin{biblist}
	
	\bib{MR2267950}{article}{
   author={Barrab\'{e}s, Esther},
   author={Cors, Josep Maria},
   author={Pinyol, Conxita},
   author={Soler, Jaume},
   title={Hip-hop solutions of the $2N$-body problem},
   journal={Celestial Mech. Dynam. Astronom.},
   volume={95},
   date={2006},
   number={1-4},
   pages={55--66},
   issn={0923-2958},
   review={\MR{2267950}},
   doi={10.1007/s10569-006-9016-y},
}
		
		\bib{MR1300265}{article}{
			author={Belbruno, Edward},
			author={Llibre, Jaume},
			author={Oll\'{e}, Merce},
			title={On the families of periodic orbits which bifurcate from the
				circular Sitnikov motions},
			journal={Celestial Mech. Dynam. Astronom.},
			volume={60},
			date={1994},
			number={1},
			pages={99--129},
			issn={0923-2958},
			doi={10.1007/BF00693095},
		}
		
%		\bib{MR2267950}{article}{
%			author={Barrab\'{e}s, Esther},
%			author={Cors, Josep Maria},
%			author={Pinyol, Conxita},
%			author={Soler, Jaume},
%			title={Hip-hop solutions of the $2N$-body problem},
%			journal={Celestial Mech. Dynam. Astronom.},
%			volume={95},
%			date={2006},
%			number={1-4},
%			pages={55--66},
%			issn={0923-2958},
%			doi={10.1007/s10569-006-9016-y},
%		}
		
		\bib{MR2570295}{article}{
			author={Barrab\'{e}s, Esther},
			author={Cors, Josep M.},
			author={Pinyol, Conxita},
			author={Soler, Jaume},
			title={Highly eccentric hip-hop solutions of the $2N$-body problem},
			journal={Phys. D},
			volume={239},
			date={2010},
			number={3-4},
			pages={214--219},
			issn={0167-2789},
			doi={10.1016/j.physd.2009.10.019},
		}
	
\bib{MR1820355}{article}{
	author={Chenciner, Alain},
	author={Venturelli, Andrea},
	title={Minima de l'int\'{e}grale d'action du probl\`eme newtonien de 4 corps de
		masses \'{e}gales dans ${\bf R}^3$: orbites ``hip-hop''},
	language={French, with English and French summaries},
	journal={Celestial Mech. Dynam. Astronom.},
	volume={77},
	date={2000},
	number={2},
	pages={139--152 (2001)},
	issn={0923-2958},
	doi={10.1023/A:1008381001328},
        }

\bib{MR2401905}{article}{
   author={Ferrario, D. L.},
   title={Transitive decomposition of n-body symmetry groups},
   conference={title={SPT 2007 Symmetry and perturbation theory}},
   book={publisher={World Sci. Publ., Hackensack, NJ}},
   date={2008},
   pages={73-80},
   doi={10.1142/9789812776174-0009},
}

\bib{MR2031430}{article}{
   author={Ferrario, Davide L.},
   author={Terracini, Susanna},
   title={On the existence of collisionless equivariant minimizers for the
   classical $n$-body problem},
   journal={Invent. Math.},
   volume={155},
   date={2004},
   number={2},
   pages={305-362},
   issn={0020-9910},
   doi={10.1007/s00222-003-0322-7},
}	
		
		\bib{MR1166070}{article}{
			author={Hagel, J.},
			title={A new analytic approach to the Sitnikov problem},
			journal={Celestial Mech. Dynam. Astronom.},
			volume={53},
			date={1992},
			number={3},
			pages={267--292},
			issn={0923-2958},
			doi={10.1007/BF00052614},
		}
		
		\bib{KraPa}{book}{
			author={Krantz, Steven G.},
			author={Parks, Harold R.},
			title={The implicit function theorem},
			note={History, theory, and applications},
			publisher={Birkh\"{a}user Boston, Inc., Boston, MA},
			date={2002},
			pages={xii+163},
		}
		
		\bib{MR1205666}{article}{
			author={Meyer, Kenneth R.},
			author={Schmidt, Dieter S.},
			title={Librations of central configurations and braided Saturn rings},
			journal={Celestial Mech. Dynam. Astronom.},
			volume={55},
			date={1993},
			number={3},
			pages={289--303},
			issn={0923-2958},
			doi={10.1007/BF00692516},
		}
		
		\bib{MR3097024}{article}{
			author={Rivera, Andr\'{e}s},
			title={Periodic solutions in the generalized Sitnikov $(N+1)$-body
				problem},
			journal={SIAM J. Appl. Dyn. Syst.},
			volume={12},
			date={2013},
			number={3},
			pages={1515--1540},
			doi={10.1137/120883876},
		}
		
		\bib{MR0127389}{article}{
			author={Sitnikov, K.},
			title={The existence of oscillatory motions in the three-body problems},
			journal={Soviet Physics. Dokl.},
			volume={5},
			date={1960},
			pages={647--650},
			issn={0038-5689},
		}
		
	\end{biblist}
\end{bibdiv}

\end{document}